\documentclass[reqno,11pt]{amsart}
\usepackage[utf8]{inputenc}
\usepackage{mathtools}
\usepackage{amsmath}
\usepackage{amssymb}
\usepackage{bbm}
\usepackage{tikz-network}
\usepackage{enumitem}
\usepackage{float}
\usepackage{hyperref}

\newtheoremstyle{normal}
  {1em plus .2em minus .1em} 
  {1em plus .2em minus .1em} 
  {\normalfont} 
  {} 
  {\bfseries} 
  {} 
  {.5em} 
  {} 

\newtheorem{theorem}{Theorem}
\newtheorem{definition}{Definition}
\newtheorem{lemma}{Lemma}
\newtheorem{corollary}{Corollary}

\theoremstyle{normal}

\newtheorem{assumption}{Assumption}
\newtheorem{remark}{Remark}

\newcommand{\G}{\overrightarrow{\mathbb{G}}}
\newcommand{\E}{\overrightarrow{E}}
\newcommand{\F}[1]{\mathcal{F}_{#1}}
\newcommand{\C}[2]{C_{#1}^{(#2)}}
\newcommand{\U}[2]{U_{#1}^{(#2)}}
\newcommand{\us}[2]{u_{#1}^{(#2)}}
\newcommand{\Z}[2]{Z_{#1}^{(#2)}}

\newcommand{\z}[2]{z_{#1}^{(#2)}}
\newcommand{\N}[2]{\mathcal N_{#1}^{(#2)}}
\newcommand{\bb}[1]{\mathbbm{#1}}

\newcommand{\Q}{\mathbb{P}}
\newcommand{\Qr}{\mathbb{Q}}
\newcommand{\rs}[2]{#1\rightsquigarrow#2}
\newcommand{\rt}[2]{#1\rightarrow#2}
\newcommand{\e}{\overrightarrow{e}}

\title{Interacting Urn Schemes on Finite Ancestral Directed Acyclic Graphs}
\author{Antar Bandyopadhyay and Deborshi Das}
\address{Theoretical Statistics and Mathematics Unit\\
Indian Statistical Institute, Delhi Centre\\
7 S. J. S. Sansanwal Marg\\
New Delhi 110016, INDIA}
\email{antar@isid.ac.in and deborshidas6@gmail.com}

\begin{document}
\begin{abstract}
 We study interacting finite-color urn schemes on directed acyclic graphs, allowing the graph to be infinite. Each urn evolves through reinforcements driven by colors drawn from its in-neighboring urns via edge-dependent reinforcement matrices. Assuming that every vertex has only finitely many ancestors, we prove almost sure convergence of urn proportions and show that the limiting configuration is determined by vertices with no ancestors or self-loops. Under additional balance and irreducibility assumptions on reinforcement matrices, we also obtain second-order asymptotic results in all regimes of appropriately defined parameters.

\medskip
\medskip
\noindent{\textbf{Keywords:}} Interacting urn models;
rate of convergence; reinforced stochastic processes; self-organized criticality; stochastic approximation; strong convergence; synchronization.

\medskip
\medskip
\noindent{\textbf{2020 Mathematics Subject Classification:}} Primary 60K35;
Secondary 60F15, 60F05.
\end{abstract}
\maketitle
\pagestyle{plain}

\section{Introduction}
The classical Pólya urn was introduced by Eggenberger and Pólya \cite{eggenberger1923statistik}, followed by Pólya's influential interpretation of the model as a mechanism for contagion and epidemic spread \cite{polya1930quelques}. Since then, Pólya urn models and their generalizations have been extensively studied as fundamental examples of reinforced stochastic processes, with applications in statistics, machine learning, population genetics, network growth, and social dynamics (see, e.g., \cite{Blackwell_Ferguson,
Polyaurn_coalescent,
Urn_population_genetics,
Pemantle_A_survey,
Mahamud_Polya_urn,
two_color_randomly_reinforced_urn_design,
BaTh14, 
bandyopadhyay2017polya,
Dynamics_of_an_adaptive_randomly_reinforced_urn,
Nonparametric-covariate-adjusted-response-adaptive-design,
Bandyooadhyay_Kaur2018,
Urn_models_response_adaptive_randomized_designs,
Terenin_Polya_urn,
bandyopadhyay2020strong,
bandyopadhyay2022new,
Urn_preferential_attachment,
Urnmodel_social_dynamics}).

 In recent years, there has been growing interest in \emph{interacting} urn models (see, e.g., \cite{launay2011interacting,launay2012generalized,Interactingfriedmanurn-1,kaur2023interacting,Completefirstorder-7,yogesh2024interacting, kaur2025interacting}), where multiple urns evolve simultaneously and interact through an underlying network. Such models provide a natural framework for studying reinforcement in interacting systems and various collective phenomena such as synchronization, consensus formation, and polarization.

A typical interacting urn model consists of a collection of urns indexed by the vertices of a graph. At each discrete time step, one ball is drawn from every urn according to a probability law determined by the current state of the system. Each urn is then reinforced by adding balls whose numbers and colors depend on the observed draws according to a prescribed interaction rule. Interacting urn models can broadly be divided into two classes. In the first class, the drawing probabilities depend on the global network configuration, whereas the reinforcement at each urn depends only on the color drawn from that urn \cite{launay2011interacting,launay2012generalized,Interactingfriedmanurn-1,Synchronisation-2,Functionalcltfornetwork-3,Empiricalmean-4,Weightedempiricalmean-5,Completefirstorder-7,Polarization-8}. In the second class, introduced in \cite{kaur2023interacting,yogesh2024interacting,kaur2025interacting}, the drawing probability depends only on the composition of the individual urn, while the reinforcement at each urn depends on the colors drawn from its neighboring urns. Although the interaction mechanisms are different, both classes share the important property that the conditional expectation of the color indicators can be written as a linear function of the current urn proportions.

In all these models, the underlying interaction graph is assumed to be finite. This allows the use of martingale methods, Perron--Frobenius theory, and stochastic approximation techniques to study the first- and second-order asymptotic behavior of the urn proportions. Much less is known when the interaction graph is infinite, since many of these techniques do not apply directly.

In this article, we study interacting urn schemes on a class of directed graphs that may have infinitely many vertices. Specifically, we consider \emph{finite ancestral directed acyclic graphs} (FA-DAGs), that is, directed acyclic graphs in which every vertex has only finitely many ancestors. This class includes many important infinite graphs, such as infinite rooted trees, infinite rectangular lattices with a fixed orientation, and several other directed structures. Throughout the article, we use the interaction-through-reinforcement model introduced in \cite{kaur2023interacting,yogesh2024interacting,kaur2025interacting}, where each urn is sampled according to its own composition, while reinforcement depends on the colors drawn from neighboring urns.

Our interest in infinite interacting urn models is partly motivated by the search for analytically tractable stochastic systems that exhibit features similar to \emph{self-organized criticality (SOC)} (see \cite{bak1987self,pruessner2012self} for details). Although SOC is a central topic in statistical physics, obtaining rigorous mathematical results for classical models such as the Abelian sandpile \cite{bak1987self} remains difficult, especially in higher dimensions (see \cite{frigg2003self,dhar1993self,pruessner2012self} for further discussion). For our models, the FA-DAG structure provides enough locality to make a rigorous analysis possible while still allowing the graph to be infinite.

Using the finite ancestral property of the graph, we prove almost sure convergence of the urn proportions and obtain explicit descriptions of the limiting configurations for both finite and infinite graphs. We also establish second-order asymptotic results for the color-proportions of individual urns and those of their ancestors, extending several known results for finite interacting urn models to this class of infinite graphs.

\subsection*{Notations used in the article}

For any set $S$, let $|S|$ denote its cardinality. For a positive integer $k$, we denote the $k$-dimensional Euclidean space by $\mathbb{R}^k$ and regard each element as a row vector with real entries. For $x\in\mathbb{R}^k$, let $x(j)$ denote its $j$-th coordinate. We denote the canonical basis vectors of $\mathbb{R}^k$ by $e_1,e_2,\ldots,e_k$, where $e_j$ is the row vector whose $j$-th entry is $1$ and all other entries are $0$. The $k$-dimensional row vector with all entries equal to $1$ is denoted by $\mathbbm{1}_k$; when the dimension is clear from the context, we will simply write $\mathbbm{1}$.

For a matrix $A$, let $A'$ denote its transpose and let $Sp(A)$ denote the set of its eigenvalues. We denote the $k\times k$ identity matrix by $I_k$. For matrices $A_1,\ldots,A_n$, let $\operatorname{diag}(A_1,\ldots,A_n)$ denote the block-diagonal matrix whose diagonal blocks are $A_1,\ldots,A_n$. The space of all real $m\times n$ matrices is denoted by $\mathbb{R}^{m\times n}$.

Unless stated otherwise, all convergences are with respect to the standard Euclidean topology on $\mathbb{R}^k$. Occasionally, we also consider the standard topology on $\mathbb{C}^k$. Convergence in $\mathbb{R}^{m\times n}$ is understood with respect to the operator norm. We denote the $\ell^2$-norm on $\mathbb{C}^k$ by $\|\cdot\|$. The notations $\xrightarrow{a.s.}$ and $\xrightarrow{d}$ stand for almost sure convergence and convergence in distribution respectively.

For $\mu\in\mathbb{R}^k$ and a positive semidefinite matrix $\Sigma\in\mathbb{R}^{k\times k}$, we write $N_k(\mu,\Sigma)$ for the $k$-dimensional normal (or singular normal) distribution with mean vector $\mu$ and positive define 
(or respectively non-negative definite)
covariance matrix
$\Sigma$. 

For two deterministic real sequences $\{a_n\}_{n\ge1}$ and $\{b_n\}_{n\ge1}$, we write $a_n=O(b_n)$ if there exist constants $c>0$ and $n_0\ge1$ such that
\[
|a_n|\le c|b_n|,\qquad \text{for all } n\ge n_0.
\]
Similarly, for a sequence of random vectors $\{X_n\}_{n\ge1}$, we write
\[
X_n=O(a_n)\quad {a.s.}
\]
if there exists a finite non-negative random variable $C$ such that
\[
\Q\bigl(\|X_n\|>C|a_n|\ \text{i.o.}\bigr)=0,
\]
where ``i.o.'' stands for ``infinitely often.''

\subsection*{Organization of the article} The article is constructed as follows. \autoref{sec:model} introduces the general model and network structure, \autoref{sec:results} states the main results, and \autoref{sec:proofs} contains the proofs.

\section{Model Description}\label{sec:model}

Let $\G=(V,\E)$ be a directed graph with nonempty vertex set $V$ and edge set
$\E\subseteq V\times V$. We say that $\G$ is \emph{finite} or \emph{infinite}
according as $V$ is finite or infinite. For each $v\in V$, define
\[
d_v^{in}:=\big|\{u\in V:(u,v)\in\E\}\big|,
\qquad
d_v^{out}:=\big|\{u\in V:(v,u)\in\E\}\big|,
\]
which are referred to as the \emph{in-degree} and \emph{out-degree} of $v$,
respectively. We further define
\[
d_v:=d_v^{in}+d_v^{out}
\]
and refer to $d_v$ as the \emph{degree} of $v$. If $\G$ is infinite, we
assume throughout that it is \emph{locally finite}, that is,
$d_v<\infty$ for every $v\in V$.

For $u,v\in V$, we write $u\rightsquigarrow v$ whenever $(u,v)\in\E$.
Given vertices $v_1,\ldots,v_n\in V$, $n\ge2$, such that
$v_j\rightsquigarrow v_{j+1}$ for $j=1,\ldots,n-1$, we write $v_1v_2\ldots v_n$ to denote the directed path
\[
v_1\rightsquigarrow v_2\rightsquigarrow\cdots
\rightsquigarrow v_n.
\]
We write
$\rt u v$ if there exists a directed path from $u$ to $v$. 

For our model, we place an urn at each vertex of $\G$. Each urn contains
balls of $k\ge2$ colors, indexed by $1,\ldots,k$. We denote the initial
configuration of the urn at vertex $v$ by
\[
\C{0}{v}
=
\bigl(\C{0}{v}(1),\ldots,\C{0}{v}(k)\bigr),
\]
where $\C{0}{v}(j)$ denotes the initial number of balls of color $j$.
For every directed edge $\e\in\E$, let $R^{\e}$ be a real $k\times k$
matrix, referred to as the \emph{reinforcement matrix} associated with
the edge $\e$.

The initial configurations $\{\C{0}{v}:v\in V\}$ and the reinforcement
matrices $\{R^{\e}:\e\in\E\}$ may be random. We assume, however, that they
have nonnegative entries almost surely. Furthermore, we assume that
\[
\C{0}{v}\bb{1}'>0
\qquad\text{a.s. for every }v\in V,
\]
and that every row of $R^{\e}$ is nonzero almost surely. These assumptions
ensure that drawing can be done from every urn at every time and that every color
can receive reinforcement.

We denote the configuration of the urn at vertex $v$ after $n$ updates by
\[
\C{n}{v}
=
\bigl(\C{n}{v}(1),\ldots,\C{n}{v}(k)\bigr).
\]

We consider two evolution schemes, referred to as the
\emph{sum-reinforcement scheme} and the \emph{average-reinforcement
scheme}.

\subsection*{Sum-reinforcement scheme}

The urn configurations evolve according to
\begin{equation}\label{2}
\C{n+1}{v}
=
\begin{cases}
\displaystyle
\C{n}{v}
+
\sum_{u:u\rightsquigarrow v}
\Z{n+1}{u}R^{(u,v)},
&\text{if}~d_v^{in}>0,\\[2mm]
\C{n}{v},
&\text{if}~d_v^{in}=0.
\end{cases}
\end{equation}
Here, $\Z{n+1}{v}$ denotes the color of the ball drawn from the urn at
vertex $v$ at time $n+1$, viewed as a random vector taking values in
$\{e_1,\ldots,e_k\}$, where $e_1,\ldots,e_k$ are the canonical basis
vectors of $\mathbb{R}^k$.

Let
\[
\mathcal{F}_n
:=
\sigma\left(
\C{j}{u},R^{\e}:
j\le n,\ u\in V,\ \e\in\E
\right).
\]
Conditionally on $\mathcal{F}_n$, the family
$\{\Z{n+1}{v}:v\in V\}$ is independent. More precisely, for every
nonempty finite set $V_0\subseteq V$ and every collection
$\{\z{}{v}\in\{e_1,\ldots,e_k\}:v\in V_0\}$,
\[
\Q\left(
\bigcap_{v\in V_0}
\{\Z{n+1}{v}=\z{}{v}\}
\,\middle|\,\mathcal{F}_n
\right)
=
\prod_{v\in V_0}
\frac{
\C{n}{v}(\z{}{v})'
}{
\C{n}{v}\bb{1}'
}.
\]

\subsection*{Average-reinforcement scheme}

In the average-reinforcement scheme, the urn configurations evolve
according to
\begin{equation}\label{3}
\C{n+1}{v}
=
\begin{cases}
\displaystyle
\C{n}{v}
+
\frac{1}{d_v^{in}}
\sum_{u\rightsquigarrow v}
\Z{n+1}{u}R^{(u,v)},
&\text{if}~d_v^{in}>0,\\[2mm]
\C{n}{v},
&\text{if}~d_v^{in}=0.
\end{cases}
\end{equation}
The drawing mechanism is the same as in the sum-reinforcement scheme:
conditionally on $\mathcal{F}_n$, the family
$\{\Z{n+1}{v}:v\in V\}$ is independent and, for each $v\in V$,
\[
\Q\left(
\Z{n+1}{v}=e_j\mid\mathcal{F}_n
\right)
=
\frac{\C{n}{v}(j)}
{\C{n}{v}\bb{1}'},
\qquad j=1,\ldots,k.
\]
Thus, the two schemes differ only in the amount of reinforcement added to
each urn: in the sum-reinforcement scheme, the contributions from all
in-neighbors are added directly, whereas in the average-reinforcement
scheme, their average is added.

\subsection{Interpretation of the models}

At each time step, one ball is drawn independently from every urn according to
its current configuration. The urns are then reinforced simultaneously. For a
vertex $v$, every in-neighbor $u$ satisfying $\rs{u}{v}$ contributes to the
reinforcement of the urn at $v$. If $\Z{n+1}{u}=e_j$, then in the sum-reinforcement scheme,
$R^{(u,v)}_{j,l}$ balls of color $l$ are added to the urn at $v$, whereas in
the average-reinforcement scheme,
$R^{(u,v)}_{j,l}/d_v^{\mathrm{in}}$ balls of color $l$ are added, for
$l=1,\ldots,k$. These updates are performed simultaneously and independently at all the vertices.
Fractional reinforcements are allowed and are consistent with the evolution
equations \eqref{2} and \eqref{3}, as is standard in the classical urn literature.

\subsection{Comparison between the two models}

The two models differ only in the way reinforcements from the in-neighbors are
aggregated. One uses their sum, whereas the other uses their average.
Consequently, for a fixed graph and a fixed collection of reinforcement
matrices, the two models may exhibit different asymptotic behavior. However,  
note that 
the second model can always be obtained from
the first model by 
simply replacing each replacement matrx
$R^{\e}$ by 
$R^{\e}/d_v^{\mathrm{in}}$. In particular, when $d_v^{in}=1$ for every
$v\in V$, the two models coincide.
In this article, we thus present our results only for the sum-reinforcement scheme.
The corresponding results for the average-reinforcement scheme follow immediately by replacing each reinforcement matrix $R^{\e}$ with $R^{\e}/d_v^{\mathrm{in}}$.

\subsection{Finite Ancestral Directed Acyclic Graphs (FA-DAGs)}
Throughout this article, we assume that the underlying graph $\G$ belongs to the following class, which we call \emph{finite ancestral directed acyclic graphs (FA-DAG).}

\begin{enumerate}[label=\textbf{(A\arabic*)}, ref=\textbf{(A\arabic*)}]
\item\label{a1}
$\G$ is a \emph{directed acyclic graph (DAG)} and 
we allow possibility of directed 
self-loops.\\

\item\label{a2}
Every vertex has only finitely many ancestors; that is, for each $v\in V$, the ancestor set,  
\[
A_v:=\{u\in V:u\rightarrow v\}
\]
is a finite set.\\

\item\label{a3}
Every vertex with a self-loop has no in-neighbor other than itself; equivalently,
\[
(v,v)\in\E
\quad\Longrightarrow\quad
d_v^{in}=1.
\]
\end{enumerate}

By \emph{acyclic} we mean that there exists no directed path
$v_1v_2\ldots v_n$ with $n\ge3$ consisting of distinct vertices
$v_1,v_2,\ldots,v_{n-1}\in V$ and satisfying $v_n=v_1$. Equivalently,
the graph $\G$ contains no directed cycles except possibly self-loops. 
\begin{definition}
A vertex $v\in V$ is called a generator vertex if $A_v=\{v\}$, and
a stubborn vertex if $A_v=\emptyset$. 
\end{definition}
Let $G$ and $S$ denote the sets of generator and stubborn vertices of $\G$, respectively.
Note that assumtions 
$\ref{a2}$ and $\ref{a3}$ imply that the
oldest ancestors of any vertex must be either a
\emph{generator vertex} or a 
\emph{stubborn vertex}. 
Thus $G\cup S\neq\emptyset$. However, both 
$G$ and $S$ can be infinite sets. \\ 

Following are some simple examples of FA-DAGs:

\begin{figure}[H]
    \centering
    \begin{tikzpicture}
        \Vertex[label=$0$,size=0.5]{a}
        \Vertex[x=2,label=$1$,size=0.5]{b}
        \Vertex[x=4,label=$2$,size=0.5]{c}
        \Vertex[x=6,label=$3$,size=0.5]{d}
        \Vertex[x=8,label=$4$,size=0.5]{e}
        \Vertex[x=10,label=$5$,size=0.5]{f}

        \Edge[loopposition=100,Direct](a)(a)
        \Edge[Direct](a)(b)
        \Edge[Direct](b)(c)
        \Edge[Direct](c)(d)
        \Edge[Direct](d)(e)
        \Edge[Direct](e)(f)

        \node at (11,0) {$\cdots\cdots$};
    \end{tikzpicture}
    \caption{}
    \label{fig:infinite_line}
\end{figure}

\begin{figure}[H]
    \centering
    \begin{minipage}{0.48\textwidth}
        \centering
        \begin{tikzpicture}
            \Vertex[x=3,label=$0$,size=0.5]{a}
            \Vertex[x=2,y=-1,label=$1$,size=0.5]{b}
            \Vertex[x=4,y=-1,label=$2$,size=0.5]{c}
            \Vertex[x=1.2,y=-2,label=$3$,size=0.5]{d}
            \Vertex[x=2.6,y=-2,label=$4$,size=0.5]{e}
            \Vertex[x=3.4,y=-2,label=$5$,size=0.5]{f}
            \Vertex[x=4.8,y=-2,label=$6$,size=0.5]{g}
            \Vertex[x=0.8,y=-3,label=$7$,size=0.5]{h}
            \Vertex[x=1.6,y=-3,label=$8$,size=0.5]{i}
            \Vertex[x=2.6,y=-3,label=$9$,size=0.5]{j}
            \Vertex[x=4.4,y=-3,label=$10$,size=0.5]{k}
            \Vertex[x=5.2,y=-3,label=$11$,size=0.5]{l}

            \Edge[loopposition=100,Direct](a)(a)
            \Edge[Direct](a)(b)
            \Edge[Direct](a)(c)
            \Edge[Direct](b)(d)
            \Edge[Direct](b)(e)
            \Edge[Direct](c)(f)
            \Edge[Direct](c)(g)
            \Edge[Direct](d)(h)
            \Edge[Direct](d)(i)
            \Edge[Direct](e)(j)
            \Edge[Direct](g)(k)
            \Edge[Direct](g)(l)

            \node at (0.8,-3.7) {$\vdots$};
            \node at (1.6,-3.7) {$\vdots$};
            \node at (2.6,-3.7) {$\vdots$};
            \node at (4.4,-3.7) {$\vdots$};
            \node at (5.2,-3.7) {$\vdots$};
        \end{tikzpicture}
        \caption{}
        \label{fig:figure2}
    \end{minipage}
    \hfill
    \begin{minipage}{0.48\textwidth}
        \centering
        \begin{tikzpicture}
            \Vertex[x=3,label=$0$,size=0.5]{m}
            \Vertex[x=2,y=-1,label=$1$,size=0.5]{n}
            \Vertex[x=4,y=-1,label=$2$,size=0.5]{o}
            \Vertex[x=1.2,y=-2,label=$3$,size=0.5]{p}
            \Vertex[x=3,y=-2,label=$4$,size=0.5]{q}
            \Vertex[x=4.8,y=-2,label=$5$,size=0.5]{r}
            \Vertex[x=0.8,y=-3,label=$6$,size=0.5]{s}
            \Vertex[x=2.2,y=-3,label=$7$,size=0.5]{t}
            \Vertex[x=3.8,y=-3,label=$8$,size=0.5]{u}
            \Vertex[x=5.2,y=-3,label=$9$,size=0.5]{v}

            \Edge[loopposition=100,Direct](m)(m)
            \Edge[Direct](m)(n)
            \Edge[Direct](m)(o)
            \Edge[Direct](n)(p)
            \Edge[Direct](n)(q)
            \Edge[Direct](o)(q)
            \Edge[Direct](o)(r)
            \Edge[Direct](p)(s)
            \Edge[Direct](p)(t)
            \Edge[Direct](q)(t)
            \Edge[Direct](q)(u)
            \Edge[Direct](r)(u)
            \Edge[Direct](r)(v)

            \node at (0.8,-3.7) {$\vdots$};
            \node at (2.2,-3.7) {$\vdots$};
            \node at (3.8,-3.7) {$\vdots$};
            \node at (5.2,-3.7) {$\vdots$};
        \end{tikzpicture}
        \caption{}
        \label{fig:figure3}
    \end{minipage}
\end{figure}

\begin{figure}[H]
    \centering
    \begin{tikzpicture}

        \Vertex[label=$0$,size=0.5]{a}
        \Vertex[x=2,label=$1$,size=0.5]{b}
        \Vertex[x=4,label=$2$,size=0.5]{c}
        \Vertex[x=6,label=$3$,size=0.5]{d}
        \Vertex[x=8,label=$4$,size=0.5]{e}
        \Vertex[x=10,label=$5$,size=0.5]{f}

        \Vertex[y=-1,label=$8$,size=0.5]{i}
        \Vertex[x=2,y=-1,label=$9$,size=0.5]{j}
        \Vertex[x=4,y=-1,label=$10$,size=0.5]{k}
        \Vertex[x=6,y=-1,label=$11$,size=0.5]{l}
        \Vertex[x=8,y=-1,label=$12$,size=0.5]{m}
        \Vertex[x=10,y=-1,label=$13$,size=0.5]{n}

        \Vertex[y=-2,label=$16$,size=0.5]{q}
        \Vertex[x=2,y=-2,label=$17$,size=0.5]{r}
        \Vertex[x=4,y=-2,label=$18$,size=0.5]{s}
        \Vertex[x=6,y=-2,label=$19$,size=0.5]{t}
        \Vertex[x=8,y=-2,label=$20$,size=0.5]{u}
        \Vertex[x=10,y=-2,label=$21$,size=0.5]{v}

        \Vertex[y=-3,label=$24$,size=0.5]{q1}
        \Vertex[x=2,y=-3,label=$25$,size=0.5]{r1}
        \Vertex[x=4,y=-3,label=$26$,size=0.5]{s1}
        \Vertex[x=6,y=-3,label=$27$,size=0.5]{t1}
        \Vertex[x=8,y=-3,label=$28$,size=0.5]{u1}
        \Vertex[x=10,y=-3,label=$29$,size=0.5]{v1}

        \Edge[loopposition=100,Direct](a)(a)
        \Edge[Direct](a)(b)
        \Edge[Direct](b)(c)
        \Edge[Direct](c)(d)
        \Edge[Direct](d)(e)
        \Edge[Direct](e)(f)

        \Edge[Direct](i)(j)
        \Edge[Direct](j)(k)
        \Edge[Direct](k)(l)
        \Edge[Direct](l)(m)
        \Edge[Direct](m)(n)

        \Edge[Direct](q)(r)
        \Edge[Direct](r)(s)
        \Edge[Direct](s)(t)
        \Edge[Direct](t)(u)
        \Edge[Direct](u)(v)

        \Edge[Direct](q1)(r1)
        \Edge[Direct](r1)(s1)
        \Edge[Direct](s1)(t1)
        \Edge[Direct](t1)(u1)
        \Edge[Direct](u1)(v1)

        \Edge[Direct](a)(i)
        \Edge[Direct](b)(j)
        \Edge[Direct](c)(k)
        \Edge[Direct](d)(l)
        \Edge[Direct](e)(m)
        \Edge[Direct](f)(n)

        \Edge[Direct](i)(q)
        \Edge[Direct](j)(r)
        \Edge[Direct](k)(s)
        \Edge[Direct](l)(t)
        \Edge[Direct](m)(u)
        \Edge[Direct](n)(v)

        \Edge[Direct](q)(q1)
        \Edge[Direct](r)(r1)
        \Edge[Direct](s)(s1)
        \Edge[Direct](t)(t1)
        \Edge[Direct](u)(u1)
        \Edge[Direct](v)(v1)

        \node at (10.8,0) {$\cdots$};
        \node at (10.8,-1) {$\cdots$};
        \node at (10.8,-2) {$\cdots$};
        \node at (10.8,-3) {$\cdots$};

        \node at (0,-3.5) {$\vdots$};
        \node at (2,-3.5) {$\vdots$};
        \node at (4,-3.5) {$\vdots$};
        \node at (6,-3.5) {$\vdots$};
        \node at (8,-3.5) {$\vdots$};
        \node at (10,-3.5) {$\vdots$};

        \node at (10.5,-3.5) {$\ddots$};

    \end{tikzpicture}
    \caption{}
    \label{fig:graph4}
\end{figure}

\begin{figure}[H]
\centering
\begin{tikzpicture}

\Vertex[x=5,label=$0$,size=0.5]{a}
\Vertex[x=4,y=-1,label=$1$,size=0.5]{b}
\Vertex[x=6,y=-1,label=$2$,size=0.5]{c}
\Vertex[x=3,y=-2,label=$3$,size=0.5]{d}
\Vertex[x=2.5,y=-3,label=$7$,size=0.5]{h}
\Vertex[x=3.5,y=-3,label=$8$,size=0.5]{i}
\Vertex[x=4.5,y=-2,label=$4$,size=0.5]{e}
\Vertex[x=4.5,y=-3,label=$9$,size=0.5]{j}
\Vertex[x=5.5,y=-2,label=$5$,size=0.5]{f}
\Vertex[x=7,y=-2,label=$6$,size=0.5]{g}
\Vertex[x=8,y=-3,label=$11$,size=0.5]{k}
\Vertex[x=6,y=-3,label=$10$,size=0.5]{l}

\Vertex[x=6,label=$12$,size=0.5]{a1}
\Vertex[x=7,y=-1,label=$13$,size=0.5]{b1}
\Vertex[x=8,y=-1,label=$14$,size=0.5]{c1}
\Vertex[x=8,y=-2,label=$15$,size=0.5]{d1}
\Vertex[x=8,label=$16$,size=0.5]{e1}
\Vertex[x=9,label=$17$,size=0.5]{f1}
\Vertex[x=9,y=-1,label=$18$,size=0.5]{g1}
\Vertex[x=9,y=-2,label=$19$,size=0.5]{h1}

\Edge[loopposition=100,Direct](a)(a)
\Edge[Direct](a)(b)
\Edge[Direct](a)(c)
\Edge[Direct](b)(d)
\Edge[Direct](b)(e)
\Edge[Direct](c)(f)
\Edge[Direct](c)(g)
\Edge[Direct](d)(h)
\Edge[Direct](d)(i)
\Edge[Direct](e)(j)
\Edge[Direct](g)(k)
\Edge[Direct](g)(l)
\Edge[Direct](e)(f)
\Edge[Direct](f)(l)
\Edge[Direct](a)(f)
\Edge[Direct](b)(i)
\Edge[Direct](f)(j)
\Edge[Direct](i)(j)

\Edge[Direct](a1)(c)
\Edge[Direct](a1)(e)
\Edge[Direct](b1)(f)
\Edge[Direct](b1)(g)
\Edge[loopposition=100,Direct](b1)(b1)
\Edge[Direct](b1)(c1)
\Edge[Direct](c1)(d1)
\Edge[Direct](c1)(g)
\Edge[Direct](e1)(c1)
\Edge[loopposition=100,Direct](f1)(f1)
\Edge[Direct](f1)(c1)
\Edge[Direct](f1)(g1)
\Edge[Direct](h1)(d1)
\Edge[Direct](h1)(k)

\node at (9.7,0) {$\cdots$};
\node at (9.7,-1) {$\cdots$};
\node at (9.7,-2) {$\cdots$};
\node at (9.7,-3) {$\cdots$};

\node at (2.5,-3.5) {$\vdots$};
\node at (3.5,-3.5) {$\vdots$};
\node at (4.5,-3.5) {$\vdots$};
\node at (5.5,-3.5) {$\vdots$};
\node at (6.5,-3.5) {$\vdots$};
\node at (7.5,-3.5) {$\vdots$};
\node at (8.5,-3.5) {$\vdots$};
\node at (9.5,-3.5) {$\vdots$};
\end{tikzpicture}
\caption{}
\label{fig:graph5}
\end{figure}

{\footnotesize \autoref{fig:infinite_line} is the \emph{one-sided infinite line with a single generator}(0). \autoref{fig:figure2} is an \emph{infinite rooted tree with directions from root toward leaves} with root(0) as the generator. \autoref{fig:figure3} is an \emph{infinite Pascal-like triangle} with a single generator(0). \autoref{fig:graph4} is an \emph{infinite rectangular grid} with a single generator(0). \autoref{fig:graph5} contains multiple generators (0,13,17) and stubborn vertices (12,16,19).}

In this article, we are primarily interested in infinite FA-DAGs, although all of
our results apply equally to finite graphs. For $v\in V$, let
\[
\U{n}{v}:=\frac{\C{n}{v}}{\C{n}{v}\bb{1}'}
\]
denote the vector of color-proportions in the urn at vertex $v$ at time $n$, and
let
\[
\N{n}{v}:=\frac{1}{n}\sum_{i=1}^n \Z{i}{v}
\]
denote the empirical \emph{color-count statistics}, representing the proportions of
colors drawn from the urn at vertex $v$ up to time $n$.

Our objective is to study the joint asymptotic behavior of the family of urn-proportions
\[
\{\U{n}{v}: v\in V,\ n\ge0\}.
\]

Note that if $\G$ is infinite then at each time infinitely many urns are updating simultaneously, so there may not be an Athreya-Karlin type continuous embedding of the process (see \cite{athreya1968embedding}) which is 
in general, a very important tool to analyze such  reinforced stochastic processes.

\section{Main results}\label{sec:results}
\subsection{Strong convergence of urn proportions}
It is worthwhile to note that the generator urns are nothing but the usual generalized P\'{o}lya type urns with replacement matrices corresponding to their self-loops. These urns are studied in the literature for more than a decade and it is well known that under some tenability conditions the urn proportion always converges (see, e.g.,\cite{gouet1989martingale,gouet1993martingale,dasgupta2011strong}). If the FA-DAG has at least one generator vertex then we make the following assumption.

\begin{assumption}\label{asm:strongconvergence}
Color-count statistics corresponding to all generator urns converge almost surely. That is for every generator $g\in G$ there exists a random vector $\N{\infty}{g}$ such that
$$\N{n}{g}\xrightarrow{a.s.}\N{\infty}{g}$$ 
\end{assumption}

\begin{remark}
Assumption \ref{asm:strongconvergence} covers a large class of urns, in particular it holds for any balanced urn (see \cite{gouet1989martingale},\cite{gouet1997strong},\cite{dasgupta2011strong}). The restriction of balancedness just helps to sort out the martingales corresponding to the irreducible blocks of the reinforcement matrix which makes it easy to establish convergence. However the martingale approach does not work when the reinforcement matrix is not balanced. The famous Athreya-Karlin embedding of urns leads to a continuous time Markov branching process and helps to establish the convergence for irreducible reinforcement matrices without the restriction of balancedness (see \cite{athreya1968embedding}). Recently Janson proved the convergence for any triangular urn (see \cite{janson2024almost}) using the embedding technique and the theory of general continuous time Markov branching process with continuous state-space, which is, to our knowledge, the best known result for unbalanced replacements. Assumption \ref{asm:strongconvergence} is believed to be true for any finite urn, since the colors can always be arranged in such a way that the corresponding reinforcement matrix becomes upper block triangular with irreducible diagonal blocks and then it seems the techniques used in \cite{janson2024almost} along with the techniques of irreducible urns can be used, although it is still open for further research.
\end{remark}

\begin{remark}
Note that Assumption \ref{asm:strongconvergence} is equivalent to saying that $\U{n}{g}$ converges almost surely for all $g\in G$, since 
\begin{equation}
\U{n}{g}=\frac{\N{n}{g}R^{(g,g)}}{\N{n}{g}R^{(g,g)}\mathbbm{1}'}
\end{equation} 
and by a martingale version of Borel-Cantelli lemma it can be shown that (see \cite{gouet1997strong}) 
\begin{equation}
\N{n}{g}=(I_k+o(n))\frac{1}{n}\sum\limits_{j=0}^n\U{j}{g}
\end{equation}
where $o(n)$ is a diagonal matrix whose entries converge to $0$ almost surely.
\end{remark}

The following result on almost sure asymptotic behavior of urn proportions holds for very general set-up.

\begin{theorem}[\textbf{Strong convergence of urn proportions}]
\label{thm:strongconvergence}
Suppose that the underlying graph $\G$ is an FA-DAG and that
Assumption~\ref{asm:strongconvergence} holds. Then all urn-proportions
$\{\U{n}{v}:n\geq0,\ v\in V\}$ converge almost surely. More precisely, for
each $v\in V$, there exists a $k$-dimensional random vector
$\U{\infty}{v}$ such that
\begin{equation}
    \U{n}{v}\xrightarrow{a.s.}\U{\infty}{v},
    \qquad\text{as }n\rightarrow\infty.
\end{equation}

For the sum-reinforcement scheme, the limiting proportions are
characterized recursively by
\begin{equation}\label{lim}
\U{\infty}{v}
=
\begin{cases}
\U{0}{v},
& \text{if }v\in S,\\[0.4cm]
\displaystyle
\frac{
\sum\limits_{\substack{u:u\rightsquigarrow v,u\in G}}
\N{\infty}{u}R^{(u,v)}
+
\sum\limits_{\substack{u:u\rightsquigarrow v,u\in V\setminus G}}
\U{\infty}{u}R^{(u,v)}
}{
\sum\limits_{\substack{u:u\rightsquigarrow v,u\in G}}
\N{\infty}{u}R^{(u,v)}\bb{1}'
+
\sum\limits_{\substack{u:u\rightsquigarrow v,u\in V\setminus G}}
\U{\infty}{u}R^{(u,v)}\bb{1}'
},
& \text{if }v\in V\setminus S.
\end{cases}
\end{equation}

For the average-reinforcement scheme, the limiting proportions are
characterized by the same recursion, with each reinforcement matrix
$R^{(u,v)}$ replaced by $R^{(u,v)}/d_v^{\mathrm{in}}$.
\end{theorem}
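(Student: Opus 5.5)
Because every vertex has finitely many ancestors and the graph is acyclic apart from self-loops, we argue by strong induction on the \emph{depth} of a vertex. The depth $h(v)$ is the length of the longest directed path (self-loops ignored) ending at $v$; it is finite by \ref{a2}. Depth-zero vertices are exactly $S\cup G$. For $v\in S$ the urn never changes, so $\U{n}{v}=\U{0}{v}$. For $g\in G$, Assumption~\ref{asm:strongconvergence} together with the representation $\U{n}{g}=\N{n}{g}R^{(g,g)}/\N{n}{g}R^{(g,g)}\bb{1}'$ from the preceding remark gives $\U{n}{g}\to\U{\infty}{g}$ almost surely. Since $V$ is countable (each vertex has finitely many ancestors and the graph is locally finite), it suffices to prove convergence for each fixed $v$; a countable intersection of probability-one events then gives the joint statement.

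\emph{Inductive step.} Fix $v\notin S\cup G$ with $h(v)\ge1$, and assume every in-neighbour $u$ of $v$ (all have smaller depth, and $u\ne v$ by \ref{a3}) satisfies $\U{n}{u}\to\U{\infty}{u}$ almost surely. Iterating \eqref{2} gives
\[
\frac{\C{n}{v}}{n}=\frac{\C{0}{v}}{n}+\sum_{u\rightsquigarrow v}\N{n}{u}R^{(u,v)}.
\]
Hence it is enough to show that $\N{n}{u}$ converges for every in-neighbour $u$. For $u\in G$ this is exactly Assumption~\ref{asm:strongconvergence}, with limit $\N{\infty}{u}$. For $u\notin G$, decompose
\[
\N{n}{u}=\frac1n\sum_{i=1}^n\bigl(\Z{i}{u}-\U{i-1}{u}\bigr)+\frac1n\sum_{i=1}^n\U{i-1}{u}.
\]
The first term is $M_n/n$, where $M_n$ is a martingale with increments bounded by $2$. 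By the martingale strong law (or Azuma--Hoeffding with Borel--Cantelli), $M_n/n\to0$ almost surely. The second term is a Cesàro average and converges to $\U{\infty}{u}$ by the induction hypothesis. We conclude that $\C{n}{v}/n\to L_v:=\sum_{u\in G}\N{\infty}{u}R^{(u,v)}+\sum_{u\notin G}\U{\infty}{u}R^{(u,v)}$, the sums running over in-neighbours $u$ of $v$.

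\emph{Main obstacle: the denominator.} To deduce $\U{n}{v}=(\C{n}{v}/n)/(\C{n}{v}\bb{1}'/n)\to L_v/L_v\bb{1}'$, which is \eqref{lim}, we need $L_v\bb{1}'>0$ almost surely. Each limiting vector $\N{\infty}{u}$ or $\U{\infty}{u}$ is a probability vector, so it has some coordinate $j$ with positive weight. Since every row of $R^{(u,v)}$ is nonzero with nonnegative entries, row $j$ of $R^{(u,v)}$ has positive sum, and therefore $\N{\infty}{u}R^{(u,v)}\bb{1}'>0$ (respectively $\U{\infty}{u}R^{(u,v)}\bb{1}'>0$). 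As $v$ has at least one in-neighbour, $L_v\bb{1}'>0$.

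This argument uses only the nonnegativity and row assumptions, so no balance condition is needed. The random reinforcement matrices are $\F{0}$-measurable, so they pose no difficulty for the martingale argument, which is carried out conditionally on them. The average-reinforcement case follows by substituting $R^{(u,v)}/d_v^{in}$ throughout, as noted in Section~\ref{sec:model}.
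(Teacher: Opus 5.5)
Your proof is correct. Its skeleton is the same as the paper's: induction on the length of the longest path from $G\cup S$ to $v$, with $\C{n}{v}/n=\C{0}{v}/n+\sum_{u\rightsquigarrow v}\N{n}{u}R^{(u,v)}$ split into a Ces\`aro part and a fluctuation part.

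The genuine difference is how you dispose of the fluctuation term $\frac1n\sum_{i\le n}(\Z{i}{u}-\U{i-1}{u})$. The paper first proves Lemma~\ref{lem:1}: conditionally on the entire trajectory $\{\U{l}{u}:l\ge0\}$, the draws from $u$ are independent with laws $\U{j-1}{u}$. This uses the DAG structure (the composition at $u$ is driven only by ancestors' draws), regular conditional probabilities and a Doob limiting argument. Only then does it apply Hoeffding's inequality for independent Bernoulli variables and Borel--Cantelli (Lemma~\ref{lem:2}). You instead note that $\sum_{i\le n}(\Z{i}{u}-\U{i-1}{u})$ is already a bounded-increment martingale for the filtration generated by past configurations and draws. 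Azuma--Hoeffding plus Borel--Cantelli (or the martingale strong law) then gives the same conclusion directly.

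Your route is shorter and needs no conditioning on the future of the composition process. It also does not use acyclicity in this step; the FA-DAG structure enters only through the induction that makes the Ces\`aro averages converge. What the paper's route buys is Lemma~\ref{lem:1} itself, which is not needed for this theorem.

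You also check explicitly that the limiting denominator $L_v\bb{1}'$ is almost surely positive, since each limit is a probability vector and $R^{(u,v)}\bb{1}'$ has strictly positive entries. The paper passes over this with ``then clearly''.

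One small inaccuracy: your aside that $V$ is countable needs the underlying graph to be connected. Local finiteness and finite ancestry still allow uncountably many components. The theorem is a per-vertex almost-sure statement, so nothing depends on this.
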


\begin{remark}
\autoref{thm:strongconvergence} holds for any initial configuration $\{\C{0}{v} : v\in V\}$ and reinforcement matrices $\{R^{\e} : \e\in\E\}$ subject to the condition 
that initially all urns are non-empty (that is 
$\C{0}{v}\bb{1}'>0$ almost surely for all $v\in V$) and replacements are non-trivial (that is all $R^{\e}$'s have almost surely non-zero rows).    
\end{remark}

\begin{remark}
Assumption~\ref{asm:strongconvergence} is needed only when generator vertices are present. In the absence of generators, i.e., when \(G=\emptyset\), \autoref{thm:strongconvergence} guarantees almost sure convergence of all urns regardless.
\end{remark}
    
\begin{corollary}[\textbf{Synchronization to a random limit}]\label{cor:synchronization} 
Consider the sum-reinforcement scheme and suppose that $\G$ has a unique generator
vertex $g$ and no stubborn vertices, that is, $G\cup S=G=\{g\}$. Assume that all
reinforcement matrices are of P\'olya type, that is, $R^{\e}=I_k$ for all $\e\in\E$,
and that the initial configurations are non-random with
$\N{0}{g}=(n_1,n_2,\ldots,n_k)$, where $n_i>0$ for all $i$. Then, for every
$v\in V$,
\[
\U{n}{v}\xrightarrow[]{a.s.}\U{\infty}{g},
\]
where $\U{\infty}{g}$ has a Dirichlet distribution with parameter
$(n_1,n_2,\ldots,n_k)$.
\end{corollary}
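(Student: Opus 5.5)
The plan is to deduce the corollary from \autoref{thm:strongconvergence}, proceeding by induction along the DAG structure. Note that the statement writes $\N{0}{g}$ for the initial configuration; we read this as $\C{0}{g}=(n_1,\ldots,n_k)$.

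\textbf{Step 1: the generator urn.} By \ref{a3}, the only in-neighbour of $g$ is $g$ itself. Since $R^{(g,g)}=I_k$, the urn at $g$ evolves as $\C{n+1}{g}=\C{n}{g}+\Z{n+1}{g}$. This is the classical P\'olya urn with initial composition $(n_1,\ldots,n_k)$. By the classical Eggenberger--P\'olya/Blackwell--MacQueen result, $\U{n}{g}$ is a bounded martingale and converges almost surely to a Dirichlet$(n_1,\ldots,n_k)$ limit $\U{\infty}{g}$.

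Moreover, $\C{n}{g}=\C{0}{g}+n\N{n}{g}$ and $\C{n}{g}\bb{1}'=\sum_i n_i+n$. Hence $\N{n}{g}=\U{n}{g}(1+O(1/n))-\C{0}{g}/n$, which converges almost surely to $\U{\infty}{g}$. So Assumption~\ref{asm:strongconvergence} holds with $\N{\infty}{g}=\U{\infty}{g}$, and \autoref{thm:strongconvergence} applies. It gives a limit $\U{\infty}{v}$ for every $v$ satisfying the recursion \eqref{lim}.

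\textbf{Step 2: every other vertex has $g$ as an ancestor.} Take $v\neq g$. Since $S=\emptyset$ and $G=\{g\}$, the remark after the definition of generators says the oldest ancestors of $v$ lie in $G\cup S$. So $g\in A_v$.

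Next we induct on the finite set $A_v$, using the ancestral order, which is well-founded by \ref{a2} and acyclicity. Every in-neighbour $u$ of $v$ other than $g$ has $A_u\subsetneq A_v$. Indeed $A_u\subseteq A_v$, and $u\in A_v\setminus A_u$ because there is no cycle and $u\ne v$. Note also that $v$ has no self-loop, since otherwise \ref{a3} would force $A_v=\{v\}$ and make $v$ a generator. Finally $v\notin S$ implies $d_v^{in}\ge 1$.

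The induction hypothesis is that $\U{\infty}{u}=\U{\infty}{g}$ for every in-neighbour $u\neq g$ of $v$.

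\textbf{Step 3: the recursion preserves the common limit.} In \eqref{lim} with $R^{(u,v)}=I_k$, every summand in the numerator equals $\U{\infty}{g}$: the term from $u=g$ is $\N{\infty}{g}=\U{\infty}{g}$, and the other terms equal $\U{\infty}{g}$ by the induction hypothesis. Each of the $d_v^{in}$ denominator terms equals $\U{\infty}{g}\bb{1}'=1$. So
\[
\U{\infty}{v}=\frac{d_v^{in}\,\U{\infty}{g}}{d_v^{in}}=\U{\infty}{g},
\]
which closes the induction.

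\textbf{Main obstacle.} The only real subtlety is that the induction over a possibly infinite graph must be well-founded. This is handled by inducting on $|A_v|$, which is finite by \ref{a2} and strictly decreases along edges by acyclicity. A second point is checking that the identification $\N{\infty}{g}=\U{\infty}{g}$ holds on the same almost sure event. It does: with countably many vertices, intersecting the countably many almost sure events from \autoref{thm:strongconvergence} gives a single almost sure event on which all the identities hold simultaneously.
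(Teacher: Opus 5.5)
Your proposal is correct and follows the same route as the paper. You verify Assumption~\ref{asm:strongconvergence} at $g$ through the classical P\'olya urn, so that $\N{\infty}{g}=\U{\infty}{g}$ is Dirichlet, and then push the common limit through the recursion \eqref{lim} by induction. Your induction on $|A_v|$ makes explicit the induction the paper leaves implicit. One small point: the step $u\notin A_u$ for an in-neighbour $u\neq g$ also needs that $u$ has no self-loop, which follows from \ref{a3} exactly as you argued for $v$.
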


\begin{remark}
   Corollary~\ref{cor:synchronization} covers a broad class of models, which may be viewed as Pólya urn schemes on DAGs with a root generator. The examples in Figures~\ref{fig:infinite_line}--\ref{fig:graph5} belong to this class, and in all such cases the urn-proportions synchronize and converge to a common Dirichlet-distributed random vector.
\end{remark}

\begin{remark}
    The interaction structure may be random, so that the reinforcement matrices
$\{R^{\e}:\e\in\E\}$ are random, possibly depending on the initial configurations.
Nevertheless, \autoref{thm:strongconvergence} remains valid under the conditional law given the
reinforcements. As an illustration, consider the one-sided infinite line with a single generator,
where $V=\{0,1,2,\ldots\}$ and
$\E=\{(0,0)\}\cup\{(n,n+1):n\in V\}$, and assume that $R^{(0,0)}$ is balanced.
Then all urn proportions converge almost surely, and for both models the limits are given by
\[
\U{\infty}{v}
=
\frac{\N{\infty}{0}R^{(0,1)}\cdots R^{(v-1,v)}}
{\N{\infty}{0}R^{(0,1)}\cdots R^{(v-1,v)}\bb{1}'},
\qquad \forall v\ge1,
\]
where $\N{\infty}{0}$ denotes the almost sure limit of the generator’s
color-count statistic.
\end{remark}

It must be clear from \autoref{thm:strongconvergence} that the asymptotic behavior of the entire system of urns is governed by the generator and stubborn urns only, for example in Corollary \ref{cor:synchronization} all urns converge to a random limit since the generator does it so, similarly the generator urns can lead the entire system to a deterministic limit, to demonstrate this we make the following assumptions:

\begin{assumption}\label{asm:deterministiclimit} Assume that 
\begin{enumerate}[label=\textbf{(B\arabic*)}, ref=\textbf{(B\arabic*)}] 
\item\label{b1} all initial urn configurations $\C{0}{v}$ and all reinforcement
matrices $R^{\e}$ are deterministic.\\
\item\label{b2} Each reinforcement matrix $R^{\e}$ is balanced, that is,
$R^{\e}\bb{1}'=r(\e)\bb{1}'$ for some $r(\e)>0$.\\
\item\label{b3} For every generator $g\in G$, the matrix $R^{(g,g)}$ is
irreducible.
\end{enumerate}
\end{assumption}

The following corollary is a direct consequence of \autoref{thm:strongconvergence}.
\begin{corollary}[\textbf{Strong convergence for irreducible generators}]
\label{cor:deterministiclimit}
Suppose that Assumption~\ref{asm:deterministiclimit} holds. Then the entire
system $\{\U{n}{v}:n\ge0,\ v\in V\}$ converges almost surely to a
deterministic limit $\{\us{}{v}:v\in V\}$; that is,
\[
\U{n}{v}\xrightarrow{a.s.}\us{}{v},
\qquad
\text{as }n\to\infty,\ \forall v\in V.
\]

For the sum-reinforcement scheme, the limiting proportions are
characterized as follows. For each generator $g\in G$, the limit
$\us{}{g}$ is the normalized Perron--Frobenius eigenvector of
$R^{(g,g)}$. For $v\in V\setminus G$,
\begin{equation}\label{11}
\us{}{v}:=
\begin{cases}
\U{0}{v}, & \text{if }v\in S,\\[0.25cm]
\displaystyle
\frac{1}{b_v}
\sum_{p:p\rightsquigarrow v}
\us{}{p}R^{(p,v)},
& \text{if }v\in V\setminus(G\cup S),
\end{cases}
\end{equation}
where
\[
b_v:=
\begin{cases}
\displaystyle
\sum_{p:p\rightsquigarrow v}r\big((p,v)\big),
& \text{if }v\in V\setminus S,\\[0.25cm]
1, & \text{if }v\in S,
\end{cases}
\]
is referred to as the \emph{total balance} of the vertex $v$.

For the average-reinforcement scheme, the limiting proportions are
obtained from \eqref{11} by replacing each reinforcement matrix
$R^{(p,v)}$ with $R^{(p,v)}/d_v^{\mathrm{in}}$.
\end{corollary}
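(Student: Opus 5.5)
We will deduce the corollary from \autoref{thm:strongconvergence}. To do so we verify its hypothesis, Assumption~\ref{asm:strongconvergence}, identify the limits of the generator urns as deterministic, and then propagate these limits through the recursion \eqref{lim}, using induction along the ancestral structure of the graph.

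\emph{Step 1: the generator urns.} Fix $g\in G$. Since $A_g=\{g\}$ and \ref{a3} gives $d_g^{in}=1$, the urn at $g$ evolves in isolation as a classical generalized P\'olya urn with deterministic replacement matrix $R^{(g,g)}$. By \ref{b2} this matrix is balanced with balance $r(g,g)>0$, and by \ref{b3} it is irreducible. The standard strong law for balanced irreducible urns (\cite{gouet1997strong,dasgupta2011strong}) then gives
\[
\U{n}{g}\xrightarrow{a.s.}\us{}{g},
\]
where $\us{}{g}$ is the normalized left Perron--Frobenius eigenvector of $R^{(g,g)}$. By \ref{b2} the Perron root is $r(g,g)$. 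The same results give $\N{n}{g}\xrightarrow{a.s.}\us{}{g}$. The cleanest way to see this is from the identity in the remark following Assumption~\ref{asm:strongconvergence}: the Ces\`aro means of $\U{j}{g}$ converge to $\us{}{g}$. So Assumption~\ref{asm:strongconvergence} holds with deterministic limit $\N{\infty}{g}=\us{}{g}$.

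We should also check consistency with the generator row of \eqref{lim}. The formula there produces $\us{}{g}R^{(g,g)}/(\us{}{g}R^{(g,g)}\bb{1}')=\us{}{g}$, by the eigenvector property, so everything agrees.

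\emph{Step 2: the remaining vertices.} Applying \autoref{thm:strongconvergence}, every $\U{n}{v}$ converges almost surely to some $\U{\infty}{v}$ given by \eqref{lim}. We show that $\U{\infty}{v}=\us{}{v}$ by induction on $h(v)$, the length of the longest directed path (ignoring self-loops) ending at $v$. This quantity is finite by \ref{a2}, since $A_v$ is finite and the graph is acyclic.

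The base case consists of the vertices in $G\cup S$. Stubborn urns never change, so $\U{\infty}{v}=\U{0}{v}$, which is deterministic by \ref{b1}. Generators are handled by Step 1.

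For the inductive step, take $v\notin G\cup S$. Every in-neighbour $u$ of $v$ satisfies $h(u)<h(v)$, so by the induction hypothesis $\N{\infty}{u}=\us{}{u}$ if $u\in G$ and $\U{\infty}{u}=\us{}{u}$ otherwise. Each $\us{}{u}$ is a probability vector, and $R^{(u,v)}\bb{1}'=r((u,v))\bb{1}'$, so
\[
\us{}{u}R^{(u,v)}\bb{1}'=r((u,v)).
\]
Hence the denominator in \eqref{lim} equals $\sum_{u\rightsquigarrow v}r((u,v))=b_v$, and \eqref{lim} reduces exactly to \eqref{11}. Since every quantity involved is deterministic, so is the limit.

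\emph{Step 3: the average-reinforcement scheme.} Here we replace each $R^{(p,v)}$ by $R^{(p,v)}/d_v^{in}$. This preserves balancedness, with balances $r/d_v^{in}$. It also preserves irreducibility at generators, because $d_g^{in}=1$ there. So the same argument applies verbatim.

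The only step with real content is Step 1, namely the almost sure convergence $\N{n}{g}\to\us{}{g}$ for a balanced irreducible urn. This is not proved in the paper and must be quoted from the literature. Note that the replacement matrix is nonnegative, and the paper assumes it has nonzero rows, so the cited results apply. Everything after that is algebra on \eqref{lim} plus a well-founded induction, which is guaranteed by \ref{a2}.
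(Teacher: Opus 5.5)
Your proposal is correct and follows essentially the same route as the paper. You verify Assumption~\ref{asm:strongconvergence} for each generator by quoting the strong law for balanced irreducible urns, which gives the deterministic limit $\N{\infty}{g}=\us{}{g}$, and then read off \eqref{11} from the recursion \eqref{lim} of \autoref{thm:strongconvergence}, making explicit what the paper calls ``direct'': the induction along ancestral depth, and the fact that balance forces the denominator in \eqref{lim} to equal $b_v$.
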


\subsection{Connection of the strong convergence limit with an appropriate Dirichlet problem} In this part of the section, we will show that the limit expression \eqref{lim} in \autoref{thm:strongconvergence} is closely connected with standard Dirichlet problem on $\G$.

Let $A=G\cup S$ and define the probability simplex
\[
S_k:=\left\{x\in\mathbb{R}^k:x(i)\geq 0,\ \forall i=1,\ldots,k,\quad
x\mathbbm{1}'=1\right\}.
\]
Fix a collection $\{x_a\}_{a\in A}\subseteq S_k$.

The Dirichlet problem on the FA-DAG $\G$ with boundary condition
$\{x_a\}_{a\in A}$ is to find a sequence $\{z_v\}_{v\in V}\subseteq S_k$
satisfying
\begin{equation}\label{eq:dirichlet_problem}
z_v=
\begin{cases}
x_v, & v\in A,\\[0.15cm]
\displaystyle\frac{1}{d_v^{\mathrm{in}}}\sum_{u:u\rightsquigarrow v}z_u,
& v\in V\setminus A.
\end{cases}
\end{equation}

To characterize the solution, fix $v\in V\setminus A$ and consider the
Markov chain $\{Y_n^{(v)}\}_{n\geq0}$ on $V$, started from
$Y_0^{(v)}=v$, with transition probabilities
\begin{equation}\label{eq:backward_MC}
\Q\left(Y_{n+1}^{(v)}=x\mid Y_n^{(v)}=u\right)
=
\begin{cases}
\displaystyle\frac{1}{d_u^{\mathrm in}},
& u\in V\setminus A,\ x\rightsquigarrow u,\\[0.2cm]
1, & x=u\in A,\\
0, & \text{otherwise}.
\end{cases}
\end{equation}
Thus, whenever the chain is at a vertex outside $A$, it moves uniformly
to one of its in-neighbors, while every vertex in $A$ is absorbing.
Define the hitting time
\[
T_v:=\inf\left\{n\geq0:Y_n^{(v)}\in A\right\}.
\]
Since $\G$ is an FA-DAG, every vertex has only finitely many ancestors,
and hence
\[
\Q(T_v<\infty)=1.
\]
Consequently, we may define
\begin{equation}\label{eq:dirichlet_solution}
y_v:=
\begin{cases}
\displaystyle
\mathbb{E}\left[x_{Y_{T_v}^{(v)}}\right],
& \text{if}~v\in V\setminus A,\\[0.25cm]
x_v, & \text{if}~v\in A.
\end{cases}
\end{equation}
It follows from the first-step decomposition of the Markov chain that
\[
y_v=
\frac{1}{d_v^{\mathrm in}}\sum_{u:u\rightsquigarrow v}y_u,
\qquad \forall v\in V\setminus A,
\]
while $y_v=x_v$ for all $v\in A$. Hence, $\{y_v\}_{v\in V}$ solves the
Dirichlet problem \eqref{eq:dirichlet_problem}. Moreover, the solution is
unique.

We now relate this Dirichlet problem to the strong convergence result.
Suppose that, in the sum-reinforcement scheme, all reinforcement matrices
are of P\'olya type, i.e.,
\[
R^{\e}=I_k,\qquad\forall\e\in\E.
\]
Then, by the expression \eqref{lim}, the sequence $\{X_v\}_{v\in V}$ defined by
\begin{equation}\label{eq:limit_dirichlet}
X_v=
\begin{cases}
\N{\infty}{v}, & v\in G,\\
\U{0}{v}, & v\in S,\\
\U{\infty}{v}, & v\in V\setminus(G\cup S)
\end{cases}
\end{equation}
satisfies
\[
X_v=
\begin{cases}
\N{\infty}{v}, & v\in G,\\[0.1cm]
\U{0}{v}, & v\in S,\\[0.1cm]
\displaystyle\frac{1}{d_v^{\mathrm in}}
\sum_{u:u\rightsquigarrow v}X_u,
& v\in V\setminus(G\cup S).
\end{cases}
\]
Therefore, $\{X_v\}_{v\in V}$ is precisely the unique solution of the
Dirichlet problem on $\G$ with boundary $A=G\cup S$ and boundary values
\[
\left\{\N{\infty}{g}:g\in G\right\}
\cup
\left\{\U{0}{s}:s\in S\right\}.
\]

Equivalently, for every $v\in V\setminus(G\cup S)$,
\[
\U{\infty}{v}
=
\mathbb{E}\left[
X_{Y_{T_v}^{(v)}}
\right],
\]
where
\[
X_a=
\begin{cases}
\N{\infty}{a}, & a\in G,\\
\U{0}{a}, & a\in S.
\end{cases}
\]

Thus, in the Pólya reinforcement case, the strong convergence limit is precisely the harmonic extension of the random boundary values prescribed on $G\cup S$.

\subsection{Rate of convergence of urn proportions:}\label{subsec:rate} 
In the urn-model literature, it is well known that generalized P\'olya urns with
irreducible replacement matrices typically exhibit asymptotically normal
fluctuations around their almost sure limits under suitable conditions (see
\cite{janson2006limit}, \cite{laruelle2013randomized}).

In this part of the section, we analyze the asymptotic behavior of urn fluctuations around
their almost sure limits under Assumption~\ref{asm:deterministiclimit}. We mention that
condition~\ref{b1} is not essential for the results presented here, since
one may instead work under the regular conditional probability given the
\(\sigma\)-algebra \(
\sigma\big(\{\C{0}{v},R^{\e}: v\in V,\ \e\in\E\}\big).
\) Nevertheless, we impose this assumption to avoid unnecessary technical
complications.

Corollary~\ref{cor:deterministiclimit} provides a complete characterization of the
deterministic limiting configuration of the interacting urn system under
Assumption~\ref{asm:deterministiclimit}. Our interest now lies in the asymptotic
behavior of the fluctuations
\[
\U{n}{v}-\us{}{v}.
\]
More precisely, we aim to determine the joint asymptotic distribution of any urn
$v$ together with all its ancestor urns. To this end, we first introduce some
notation and terminology.

For a vertex $v\in V$, let $\G_v=(V_v,\E_v)$ denote the induced subgraph of $\G$
consisting of the vertex $v$ and all its ancestors, where
\[
V_v:=A_v\cup\{v\}.
\]
Note that $\G_v$ is a finite FA-DAG. Define
\[
G_v:=V_v\cap G
\qquad\text{and}\qquad
S_v:=V_v\cap S
\]
to be the sets of generator and stubborn ancestors of $v$, respectively.

We relabel the vertices of $\G_v$ as follows:
\begin{itemize}
\item[1.] Vertices in $G_v\cup S_v$ are relabelled as
$1,\ldots,|G_v\cup S_v|$, with generator vertices listed first, followed by
stubborn vertices.\\
\item[2.] The remaining vertices in $V_v\setminus(G_v\cup S_v)$, if any, are
relabeled as $|G_v\cup S_v|+1,\ldots,|V_v|$, in such a way that if $j$ is an
ancestor of $l$, then $j<l$.
\end{itemize}

Such a relabeling is possible due to the acyclic nature of $\G_v$. Under this
labeling, $j\le l$ whenever $j$ is an ancestor of $l$, with equality if and only
if $l$ is a generator vertex.

Let
\[
m_1:=|G_v|,\qquad m_2:=|S_v|,\qquad m_3:=|V_v\setminus(G_v\cup S_v)|,
\]
and set $m:=|V_v|=m_1+m_2+m_3$. If $m_3=0$, the system consists only of independent
generator and stubborn urns and there is nothing further to study. Hence, we
assume throughout that $m_3\ge1$.

Define the $mk$-dimensional random vector $U_n$ by
\[
U_n :=
\begin{cases}
\bigl(
\U{n}{1},\ldots,\U{n}{m_1},
\N{n}{m_1+1},\ldots,\N{n}{m_1+m_2},
\\[-0.05cm]
\quad\qquad\qquad\qquad\qquad
\U{n}{m_1+m_2+1},\ldots,\U{n}{m}
\bigr),
&\text{if}~m_1\neq0,\ m_2\neq0,\\[0.35cm]
\bigl(
\N{n}{1},\ldots,\N{n}{m_2},
\U{n}{m_2+1},\ldots,\U{n}{m}
\bigr),
&\text{if}~ m_1=0,\ m_2\neq0,\\[0.35cm]
\bigl(
\U{n}{1},\ldots,\U{n}{m_1},
\U{n}{m_1+m_2+1},\ldots,\U{n}{m}
\bigr),
&\text{if}~ m_1\neq0,\ m_2=0 .
\end{cases}
\]

To avoid unnecessary complications, we restrict attention to the sum-reinforcement
scheme. Under Assumption~\ref{asm:deterministiclimit},
Corollary~\ref{cor:deterministiclimit} implies that
\[
U_n \xrightarrow{a.s.} u := (u^{(1)},\ldots,u^{(m)}),
\]
where the limiting vectors $u^{(j)}$ are characterized as follows:
\begin{align}
u^{(j)}R^{(j,j)} &= r((j,j))u^{(j)},
&& 1\le j\le m_1, \label{u1}\\
u^{(j)} &= \U{0}{j},
&& m_1+1\le j\le m_1+m_2, \label{u2}\\
u^{(j)} &= \frac{1}{b_j}\sum_{l:l\rightsquigarrow j} u^{(l)}R^{(l,j)},
&& m_1+m_2+1\le j\le m. \label{u3}
\end{align}

Note that the original vertex $v$ corresponds to the last coordinate of $U_n$,
since it is relabeled as $m$.

For $l,j\in\{1,\ldots,m\}$, define the $k\times k$ matrices
\[
H^{(l,j)} :=
\begin{cases}
\dfrac{1}{b_j}R^{(l,j)}, & \text{if}~l\rightsquigarrow j,\\[0.25cm]
I_k, &\text{if}~m_1+1\le l,j\le m_1+m_2,\\[0.25cm]
0, & \text{otherwise},
\end{cases}
\]
where $0$ denotes the null matrix.

Assume now that $G_v\neq\emptyset$ (equivalently, $m_1\neq0$). Define
\[
\gamma :=
\max\left\{\Re(\lambda):
\lambda\in\bigcup_{j=1}^{m_1} Sp(H^{(j,j)})\setminus\{1\}\right\}.
\]
By the Perron--Frobenius theorem, $\gamma<1$, since each $H^{(j,j)}$ is irreducible
and row-stochastic.

For $1\le j\le m_1$ and $\lambda\in Sp(H^{(j,j)})$, let $\nu_j(\lambda)$ denote the
algebraic multiplicity of $\lambda$. For
$\lambda\in\bigcup_{j=1}^{m_1} Sp(H^{(j,j)})\setminus\{1\}$, define
\[
\nu(\lambda):=\sum_{\substack{j\le m_1,\lambda\in Sp(H^{(j,j)})}}\nu_j(\lambda),
\]
and finally set
\[
\nu :=
\max\left\{\nu(\lambda):
\lambda\in\bigcup_{j=1}^{m_1} Sp(H^{(j,j)}),
\ \Re(\lambda)=\gamma\right\}.
\]

We will determine the asymptotic distribution of the fluctuations $U_n-u$ in
terms of the parameters $\gamma$ and $\nu$. Before proceeding further, we introduce a few quantities that will be used to state the fluctuation results.

\subsubsection{\textbf{\emph{Interacting reinforcement matrix}}}
We define the $mk\times mk$ block matrix $H$ by setting its $(l,j)$-th block equal
to $H^{(l,j)}$ for all $l,j\in\{1,\ldots,m\}$. Depending on the values of $m_1$ and
$m_2$, the matrix $H$ admits the following block-upper-triangular representations.

If $m_1\neq0$ and $m_2\neq0$,
\[
H=
\begin{bmatrix}
\text{diag}(H^{(1,1)},\ldots,H^{(m_1,m_1)}) & 0 & H_1\\
0 & I_{m_2k} & H_2\\
0 & 0 & 0
\end{bmatrix}.
\]

If $m_1=0$ and $m_2\neq0$,
\[
H=
\begin{bmatrix}
I_{m_2k} & H_2\\
0 & 0
\end{bmatrix}.
\]

If $m_1\neq0$ and $m_2=0$,
\[
H=
\begin{bmatrix}
\text{diag}(H^{(1,1)},\ldots,H^{(m_1,m_1)}) & H_1\\
0 & 0
\end{bmatrix}.
\]

Here,
\[
H_1 :=
\begin{bmatrix}
H^{(1,m_1+m_2+1)} & \cdots & H^{(1,m)}\\
\vdots & \ddots & \vdots\\
H^{(m_1,m_1+m_2+1)} & \cdots & H^{(m_1,m)}
\end{bmatrix}
\]
and

\[
H_2 :=
\begin{bmatrix}
H^{(m_1+1,m_1+m_2+1)} & \cdots & H^{(m_1+1,m)}\\
\vdots & \ddots & \vdots\\
H^{(m_1+m_2,m_1+m_2+1)} & \cdots & H^{(m_1+m_2,m)}
\end{bmatrix},
\]
with the understanding that $H_1$ (resp.\ $H_2$) is defined only when $m_1\neq0$
(resp.\ $m_2\neq0$). All zero blocks in $H$ denote null matrices of appropriate
order. We call $H$ the \emph{interacting reinforcement matrix} associated with
the process $\{U_n:n\ge0\}$.

\subsubsection{\textbf{\emph{Exponent matrix}}}
We next define the $mk\times mk$ \emph{exponent matrix} $D_u$, which depends on the
limit $u$ and the matrix $H$. If $m_1\neq0$ and $m_2\neq0$, set
\[
D_u=
\begin{bmatrix}
D & 0 & -H_1\\
0 & I_{m_2k} & 0\\
0 & 0 & I_{m_3k}
\end{bmatrix},
\]
where
\[
D:=I_{m_1k}-\text{diag}\!\big(H^{(1,1)}-\bb{1}'u^{(1)},\ldots,
H^{(m_1,m_1)}-\bb{1}'u^{(m_1)}\big).
\]

If $m_1\neq0$ and $m_2=0$,
\[
D_u=
\begin{bmatrix}
D & -H_1\\
0 & I_{m_3k}
\end{bmatrix},
\]
and if $m_1=0$ and $m_2\neq0$, we simply set $D_u=I_{mk}$.

\subsubsection{\textbf{\emph{Interacting scale matrix}}}
Finally, we define the \emph{interacting scale matrix} $\Gamma_u$ as the block
diagonal matrix
\begin{equation}\label{Gammau}
\Gamma_u:=\text{diag}\big(\Gamma(u^{(1)}),\Gamma(u^{(2)}),\ldots,\Gamma(u^{(m)})\big),
\end{equation}
where for a probability vector $p=(p_1,\ldots,p_k)$,
\begin{equation}\label{Gamma}
\Gamma(p):=\text{diag}(p)-p'p.
\end{equation}

\begin{remark}
The interacting reinforcement matrix $H$ governs the asymptotic behavior of the
fluctuations $U_n-u$. The exponent matrix $D_u$ and the scale matrix $\Gamma_u$
are used to describe the limiting distributions. Both $H$ and $D_u$ are upper
block triangular. In particular,
\[
Sp(H)=
\begin{cases}
\bigcup_{j=1}^{m_1} Sp(H^{(j,j)}), & m_1\neq0,\\
\{1\}, & m_1=0.
\end{cases}
\]
Moreover, if $\lambda\neq1$ is an eigenvalue of $H$, then its algebraic
multiplicity is $\nu(\lambda)$.
\end{remark}

The eigen-structure of $D_u$ is closely related to that of $H$ (see Lemma \ref{lem:3}) which leads to the following Jordan decomposition.

\subsubsection{\textbf{\emph{Jordan decomposition of the exponent matrix}}}
Since $D_u=I_{mk}$ when $m_1=0$, we henceforth assume $m_1\neq0$. Let
\[
Sp(H)=\bigcup_{j=1}^{m_1}Sp(H^{(j,j)})=\{\lambda_1,\ldots,\lambda_s\},
\qquad \lambda_1=1,
\]
and define $\nu_1:=m_1k+m_2k+m_3k$, and $\nu_j:=\nu(\lambda_j)$ for $j\ge2$.
Then, by Lemma~\ref{lem:3}, the Jordan decomposition of $D_u$ is given by
\[
T^{-1}D_uT=\text{diag}(J_1,J_2,\ldots,J_s),
\]
where $J_1$ is the $\nu_1\times\nu_1$ Jordan block associated with eigenvalue $1$,
and for $j\ge2$, $J_j$ is the $\nu_j\times\nu_j$ Jordan block associated with the
eigenvalue $1-\lambda_j$ of $D_u$.

Now we are ready to present our main theorem of this section:
\begin{theorem}[\textbf{Rate of convergence of urn proportions}]
\label{thm:rate}
In the sum-reinforcement scheme, suppose first that
$G_v=\emptyset$ (equivalently, $m_1=0$), that is, all oldest ancestors
of $v$ are stubborn. Then
\begin{equation}
    \sqrt{n}(U_n-u)
    \stackrel{d}{\longrightarrow}
    N_{mk}\bigl(0,H'\Gamma_uH\bigr)
    \qquad\text{as }n\rightarrow\infty.
\end{equation}

Suppose next that $G_v\neq\emptyset$. If $\gamma<\frac{1}{2}$, then
\begin{equation}\label{clt1}
    \sqrt{n}(U_n-u)
    \stackrel{d}{\longrightarrow}
    N_{mk}(0,\Sigma_1)
    \qquad\text{as }n\rightarrow\infty,
\end{equation}
where
\begin{equation}
    \Sigma_1
    :=
    \int_0^{\infty}
    \bigl(e^{(\frac{1}{2}I_{mk}-D_u)x}\bigr)'
    H'\Gamma_uH
    e^{(\frac{1}{2}I_{mk}-D_u)x}
    \,dx.
\end{equation}

If $\gamma=\frac{1}{2}$, then
\begin{equation}\label{clt2}
    \frac{\sqrt{n}}{(\log n)^{\nu-\frac{1}{2}}}
    (U_n-u)
    \stackrel{d}{\longrightarrow}
    N_{mk}(0,\Sigma_2)
    \qquad\text{as }n\rightarrow\infty,
\end{equation}
where
\begin{equation}
    \Sigma_2
    :=
    \lim_{n\rightarrow\infty}
    \frac{1}{(\log n)^{2\nu-1}}
    \int_0^{\log n}
    \bigl(e^{(\frac{1}{2}I_{mk}-D_u)x}\bigr)'
    H'\Gamma_uH
    e^{(\frac{1}{2}I_{mk}-D_u)x}
    \,dx.
\end{equation}

Finally, if $\frac{1}{2}<\gamma<1$, then
\begin{equation}\label{as}
    \frac{n^{1-\gamma}}{(\log n)^{\nu-1}}(U_n-u)
    -
    \sum_{\substack{l:\,\Re(\lambda_l)=\gamma,\nu_l=\nu}}
    e^{i\Im(\lambda_l)\log n}\xi_l\mathbf{r}_l
    \stackrel{a.s.}{\longrightarrow}0
    \qquad\text{as }n\rightarrow\infty,
\end{equation}
where $i=\sqrt{-1}$, and $\xi_l$'s are complex random variables defined
for all $l$ such that $\Re(\lambda_l)=\gamma$ and $\nu_l=\nu$.
Here, $\mathbf{r}_l$ is a specific left eigenvector (possibly generalized)
of $D_u$ corresponding to the eigenvalue $1-\lambda_l$. More precisely,
if $\Re(\lambda_l)=\gamma$, then $\mathbf{r}_l$ is the
$(\nu_1+\nu_2+\cdots+\nu_l)$-th row of the matrix $T^{-1}$.

Moreover, if $D_u$ is diagonalizable in $\mathbb{R}^{mk\times mk}$, then
there exists a real random vector $L$ such that
\begin{equation}
    \frac{n^{1-\gamma}}{(\log n)^{\nu-1}}(U_n-u)
    \stackrel{a.s.}{\longrightarrow}
    L
    \qquad\text{as }n\rightarrow\infty.
\end{equation}
\end{theorem}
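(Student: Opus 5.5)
We will write the evolution of $U_n$ as a linear stochastic approximation recursion with step size of order $1/n$ and martingale-difference noise, and then read off the regimes from the eigenstructure of the drift matrix $D_u$, in the spirit of the classical results for interacting urns (cf.\ \cite{kaur2023interacting,yogesh2024interacting}).

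\emph{Step 1: the basic recursion.} Under Assumption~\ref{asm:deterministiclimit}, each urn $j\ge m_1+m_2+1$ gains exactly $b_j$ balls per step, so $\C{n}{j}\bb{1}'=\C{0}{j}\bb{1}'+nb_j$. The same holds for a generator, with $b_j=r((j,j))$. Writing $\Delta M_{n+1}:=(\Z{n+1}{1}-\U{n}{1},\ldots,\Z{n+1}{m}-\U{n}{m})$, a martingale difference with conditional covariance $\operatorname{diag}(\Gamma(\U{n}{j}))\to\Gamma_u$, we obtain for the urn coordinates
\[
\U{n+1}{j}-\U{n}{j}=\frac{1}{n+c_j}\Bigl(\sum_{l\rightsquigarrow j}\U{n}{l}H^{(l,j)}-\U{n}{j}+\sum_{l\rightsquigarrow j}(\Z{n+1}{l}-\U{n}{l})H^{(l,j)}\Bigr).
\]
The stubborn coordinates are the empirical averages $\N{n}{j}$, which satisfy $\N{n+1}{j}-\N{n}{j}=\frac{1}{n+1}(\U{0}{j}-\N{n}{j}+\Delta M_{n+1}^{(j)})$. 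Stacking these, and using $u^{(j)}\bb{1}'=1$ to insert the projection $\bb{1}'u^{(j)}$ on the generator blocks (the mean-zero identity $(x-u)\bb{1}'=0$), we get
\[
U_{n+1}-u=(U_n-u)\Bigl(I-\tfrac{1}{n}D_u\Bigr)+\tfrac{1}{n}\Delta M_{n+1}H+O(n^{-2})(U_n-u)+\text{(negligible terms)}.
\]
The off-diagonal $-H_1$ block of $D_u$ comes from generator-to-descendant feeding. In the descendant blocks, the terms $\U{n}{l}H^{(l,j)}$ for non-generator $l$ contribute lower-order drift once everything is expressed in the stacked form. This is exactly why the $H_2$ and descendant-descendant blocks do not appear in $D_u$, and checking it carefully is the first technical point.

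\emph{Step 2: explicit solution.} Iterating gives
\[
U_n-u=\sum_{i\le n}\tfrac1i\,\Delta M_iH\prod_{i<t\le n}(I-D_u/t)+\text{small},
\qquad \prod_{i<t\le n}(I-D_u/t)\approx (i/n)^{D_u}=e^{-D_u\log(n/i)}.
\]
We then use the Jordan form $T^{-1}D_uT$. By Lemma~\ref{lem:3}, the eigenvalues of $D_u$ are $1$ and $1-\lambda_j$ with $\Re(1-\lambda_j)\ge1-\gamma$. The case $m_1=0$ is immediate: $D_u=I$ and $U_n-u\approx\frac1n\sum_i\Delta M_iH$, so the martingale CLT gives covariance $H'\Gamma_uH$.

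\emph{Step 3: regimes.} For $\gamma<\tfrac12$, every eigenvalue of $D_u-\tfrac12 I$ has positive real part. We apply the martingale CLT to the triangular array $\sqrt n\,\tfrac1i\Delta M_iH(i/n)^{D_u}$. The Lindeberg condition holds trivially because the increments are bounded. The conditional variance is a Riemann sum converging, after the substitution $x=\log(n/i)$, to $\Sigma_1$. For $\gamma=\tfrac12$, the same computation with Jordan blocks of size $\nu$ produces polynomial factors $x^{\nu-1}$, and normalising by $(\log n)^{\nu-1/2}$ yields $\Sigma_2$. For $\gamma>\tfrac12$, we project onto the left generalized eigenvectors $\mathbf r_l$. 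In the dominant directions, $n^{1-\lambda_l}(\log n)^{-(\nu-1)}$ times the projection is an $L^2$-bounded martingale, because $\sum_i i^{-2}i^{2(1-\gamma)}<\infty$ when $\gamma>\tfrac12$. It therefore converges a.s. to $\xi_l$, and the factor $n^{-i\Im\lambda_l}$ produces the oscillation $e^{i\Im(\lambda_l)\log n}$. Non-dominant directions are $o(\cdot)$ almost surely, by the same martingale bound combined with Kronecker's lemma. When $D_u$ is real diagonalizable, all $\Im\lambda_l=0$, so the sum is a fixed random vector $L$.

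\emph{Main obstacle.} The hardest step is controlling the remainder terms. This means showing that the $O(n^{-2})$ corrections, the replacement of $\Gamma(\U{n}{j})$ by $\Gamma(u^{(j)})$, and the discrepancy between the discrete products and $(i/n)^{D_u}$ are negligible at the finest scale $n^{-(1-\gamma)}(\log n)^{\nu-1}$. We would handle this with the standard estimate $\|\prod_{i<t\le n}(I-D_u/t)\|\le C(i/n)^{1-\gamma}(\log(n/i))^{\nu-1}$, proved blockwise on the Jordan form, together with a.s. convergence $U_n\to u$ from Corollary~\ref{cor:deterministiclimit}.
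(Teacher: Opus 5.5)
Your plan of linearizing the balanced recursion and running the stochastic-approximation analysis by hand is a legitimate substitute for what the paper does. The paper instead verifies the hypotheses of Theorems~2.1--2.3 of \cite{zhang2016central} for the drift $h$ with $Dh(u)=D_u$ and reads off the regimes. The genuine problem is in Step~1, exactly at the point you flag as ``the first technical point''. It is not true that, for an in-neighbour $l\in V_v\setminus(G_v\cup S_v)$ of a descendant $j$, the term $\U{n}{l}H^{(l,j)}$ contributes only lower-order drift. By your own recursion and \eqref{u3}, the drift in block $j$ is exactly
\[
\sum_{l\rightsquigarrow j}\bigl(\U{n}{l}-u^{(l)}\bigr)H^{(l,j)}-\bigl(\U{n}{j}-u^{(j)}\bigr).
\]
The stubborn terms vanish identically, because a stubborn draw has constant conditional mean $\U{0}{l}=u^{(l)}$, which is not the stacked coordinate $\N{n}{l}$. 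So $H_2$ does drop out, but not for reasons of order. The descendant terms, in contrast, are linear in coordinates of $U_n-u$ of the same size as $\U{n}{j}-u^{(j)}$. The true linearization matrix therefore has lower-right block $I_{m_3k}-H_3$, where $H_3$ collects the blocks $H^{(l,j)}$ with $l,j>m_1+m_2$, and not $I_{m_3k}$.

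This cannot be repaired, because the statement itself fails once $\G_v$ contains such an edge. Take $s\rightsquigarrow a\rightsquigarrow b$ with $s$ stubborn, all $R^{\e}=I_k$, and $p=\U{0}{s}$ not a unit vector, so that $m_1=0$. Then
\[
\U{n}{b}-p\approx\frac1n\sum_{i\le n}\bigl(\Z{i}{a}-\U{i-1}{a}\bigr)+\frac1n\sum_{t\le n}\bigl(\Z{t}{s}-p\bigr)\log(n/t),
\]
and the two martingale pieces are conditionally uncorrelated. Hence $\sqrt n(\U{n}{b}-p)$ has limiting covariance $\bigl(1+\int_0^1\log^2y\,dy\bigr)\Gamma(p)=3\Gamma(p)$, whereas the $(b,b)$ block of $H'\Gamma_uH$ is $\Gamma(p)$. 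So your assertion that $U_n-u\approx\frac1n\sum_i\Delta M_iH$ when $m_1=0$ is false. Similarly, when $\gamma>\frac12$, a grandchild of a generator generically fluctuates at order $n^{-(1-\gamma)}$, since it inherits the slow mode through $H^{(g,a)}H^{(a,b)}$. Yet the paper's $D_u$ gives it a zero component in every $\mathbf r_l$.

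The paper's proof has the same omission. Its formula for $h_j$, $j>m_1+m_2$, sums only over generator and stubborn $l$, and its block form of $H$ puts $0$ in the lower-right corner, contradicting the definition of $H^{(l,j)}$. What your method can prove, once Step~1 is done correctly, is the corrected theorem with $D_u$ replaced by the true Jacobian. Because $H_3$ is strictly block upper triangular in the topological labelling, the spectrum in Lemma~\ref{lem:3} is unchanged, so the trichotomy in $\gamma$ and the rates survive. But $\Sigma_1$, $\Sigma_2$, the vectors $\mathbf r_l$, and the $m_1=0$ covariance must all be recomputed with the corrected matrix.
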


\begin{remark}
In the average-reinforcement scheme, \autoref{thm:rate} remains valid with the exact same expressions for the limiting distributions. This is because the matrices \(H\) and \(D_u\) remain unchanged if each matrix \(R^{(l,j)}\) for \(l \rightsquigarrow j\) is substituted by \(R^{(l,j)}/d_v^{\mathrm in}\).
\end{remark}

\section{Proof of the main results}\label{sec:proofs}
The proof of strong convergence is based on the key observation that, conditional
on the entire color--composition process of a fixed urn $v$, the sequence of
draws from that urn is independent. While this may seem counterintuitive, it is a
direct consequence of the directed acyclic structure of the underlying interaction graph.
Specifically, the configuration of an urn $v$ depends only on the draws from
its ancestor urns and is completely independent of the draws from $v$ itself.
The following lemma provides a rigorous formulation of this fact.

\begin{lemma}\label{lem:1}
For every $v\in V\setminus G$ and $n\ge1$,
\[
\Q\left(\Z{j}{v}=z_j,\;1\le j\le n \,\middle|\, \U{l}{v},\,l\ge0\right)
=\prod_{j=1}^n \U{j-1}{v}(z_j)',
\]
for every choices $z_1,\ldots,z_n\in\{e_1,e_2,\ldots,e_k\}$.
\end{lemma}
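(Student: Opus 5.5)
The plan is to represent the whole system through independent uniforms and use that the composition process of urn $v\notin G$ is a measurable function of the draws of its strict ancestors only. Take an i.i.d.\ family $\{\xi_j^{(w)}: j\ge1, w\in V\}$ of Uniform$(0,1)$ variables, independent of $\sigma(\C{0}{w},R^{\e}: w\in V,\e\in\E)$. Realise the draws as $\Z{j}{w}=\phi(\U{j-1}{w},\xi_j^{(w)})$, where $\phi(p,x)=e_i$ if $x\in(\sum_{r<i}p(r),\sum_{r\le i}p(r)]$. This construction reproduces the conditional law prescribed in the model: given $\F{j-1}$ the draws are independent with the right marginals. Since the lemma is a statement about the joint law, it is enough to prove it in this realisation.

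\textbf{Step 1: $\U{l}{v}$ does not depend on $v$'s own uniforms.} Because $v\notin G$, assumption \ref{a3} forbids a self-loop at $v$, and acyclicity (assumption \ref{a1}) gives $v\notin A_v$. By \eqref{2}, $\C{n}{v}$ is a function of $\C{0}{v}$, the matrices $R^{(u,v)}$, and the draws $\Z{j}{u}$ for $u\rightsquigarrow v$ and $j\le n$. Unfolding this recursively, the full path $(\U{l}{v})_{l\ge0}$ is measurable with respect to
\[
\mathcal{H}_v:=\sigma\bigl(\C{0}{w},R^{\e},\xi_j^{(w)}: w\in A_v\cup\{v\},\ \e\in\E,\ j\ge1,\ \text{but } \xi^{(v)}\text{ excluded}\bigr).
\]
To justify the unfolding I would induct on time $n$ jointly over all $w\in A_v$. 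For each fixed $n$ only finitely many variables enter, since $A_v$ is finite by \ref{a2}. If $A_v=\emptyset$ (so $v\in S$), the path is just the constant $\U{0}{v}$, and the claim is immediate.

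\textbf{Step 2: conditioning.} The sequence $(\xi_j^{(v)})_j$ is independent of $\mathcal{H}_v$, and each $\U{j-1}{v}$ is $\mathcal{H}_v$-measurable. So, conditionally on $\mathcal{H}_v$, the draws $\Z{j}{v}=\phi(\U{j-1}{v},\xi_j^{(v)})$ are independent and $\Q(\Z{j}{v}=z_j\mid\mathcal{H}_v)=\U{j-1}{v}(z_j)'$. This gives
\[
\Q\bigl(\Z{j}{v}=z_j,\ 1\le j\le n\mid\mathcal{H}_v\bigr)=\prod_{j=1}^n\U{j-1}{v}(z_j)'.
\]
The right-hand side is $\sigma(\U{l}{v},l\ge0)$-measurable, and $\sigma(\U{l}{v},l\ge0)\subseteq\mathcal{H}_v$. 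Applying the tower property therefore yields the lemma.

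The main obstacle is making the coupling rigorous and showing it leaves the joint law unchanged when $V$ is infinite. I would handle this by noting that laws are determined by finite-dimensional marginals. For the finite set $A_v\cup\{v\}$ and a finite horizon $n$, the joint law of $(\C{j}{w})_{j\le n,\,w\in A_v\cup\{v\}}$ is determined by the transition rule. This is because $\C{j}{w}$ involves only draws from in-neighbours of $w$, all of which lie in $A_v$. The coupling has the same transition rule, so the laws agree, and only this finite family enters the statement.
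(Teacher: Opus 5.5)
Your proof is correct, but it takes a different route from the paper. The paper argues on finite horizons. Conditionally on the initial data, it uses the chain rule to compute the joint probability of the ancestors' draws up to time $m+n$ together with $v$'s first $n$ draws. It divides by the marginal probability of the ancestors' draws to get $\prod_{j\le n}u^{(v)}_{j-1}(z_j)'$, and then uses the tower property to pass to $\sigma(U^{(v)}_l: l<m+n)$. Finally it lets $m\to\infty$ using L\'evy's upward (Doob's) martingale convergence theorem. Your uniform-variable representation removes that limiting step. Since $v$'s own uniforms are independent of $\mathcal{H}_v$, and $\mathcal{H}_v$ already contains the entire infinite path of $U^{(v)}$, the freezing (substitution) lemma gives the product formula conditionally on $\mathcal{H}_v$ in one step. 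This also isolates cleanly why the lemma holds: $v\notin A_v$ when $v\notin G$. The cost is the transfer argument showing your coupling has the model's joint law; the paper does not need this because it computes directly with the model's law. Your transfer argument works, with one correction. The finite family whose law you identify must include the draws $Z^{(w)}_j$ for $w\in A_v\cup\{v\}$, not only the compositions $C^{(w)}_j$. In particular, $v$'s own draws are not functions of any composition inside the ancestral subsystem, and they are exactly what the lemma is about. Also, ``the transition rule'' has to be read as conditional independence of the time-$(j+1)$ draws given the whole past, including earlier draws. As literally written, the model conditions only on $\mathcal{F}_j$, which contains just compositions. The paper's chain-rule step relies on the same reading, so this is not a defect peculiar to your approach.
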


\begin{proof}
It is enough to prove the lemma for sum-reinforcement scheme. Note that if $v\in S$ then the lemma follows trivially, since then $\U{j}{v}=\U{0}{v}$ for all $j\geq0$ and conditioned on $\sigma\big(\U{0}{v}\big)$, $\Z{1}{v},\Z{2}{v},....$ are i.i.d random vectors.

Now suppose $v\in V\setminus(G\cup S)$ and $\Qr$ is a regular conditional probability of $\Q$ with respect to (conditioned on) the sigma algebra 
\[
\sigma\big(\C{0}{p},R^{\e} : p\in V,\e\in\E\big).
\]
Fix integers $m\geq1,n\geq2$ and choose vectors 
\[
\big\{\z{j}{p},\z{l}{v} : p\in A_v, 1\leq j\leq m+n, 1\leq l\leq n\big\}\subseteq\big\{e_1,e_2,...,e_k\big\}.
\]
Using the conditional independence of draws given the past, we obtain
\begin{align*} &\Qr\bigg(\Z{j}{p}=\z{j}{p}, \Z{l}{v}=\z{l}{v}, 1\leq j\leq m+n,1\leq l\leq n, p\in A_v\bigg)\\ =~&\Qr\bigg(\Z{1}{p}=\z{1}{p}, p\in A_v\cup\{v\}\bigg)\\
&\times\prod\limits_{l=2}^n\Qr\bigg(\Z{l}{p}=\z{l}{p}, p\in A_v\cup\{v\}~\bigg|~\Z{j}{p}=\z{j}{p},j\leq l-1, p\in A_v\cup\{v\}\bigg)\\ &\times\prod\limits_{r=1}^m\Qr\bigg(\Z{n+r}{p}=\z{n+r}{p},~p\in A_v~\bigg|~\Z{j}{p}=\z{j}{p}, \Z{l}{v}=\z{l}{v}, j\leq n+r-1,\\[-0.25cm]
&\hspace{9.5cm} l\leq n,p\in A_v\bigg)\\ =~&\bigg(\prod\limits_{l=1}^n\prod\limits_{p\in A_v\cup\{v\}}\us{l-1}{p}(\z{l}{p})'\bigg)\bigg(\prod\limits_{r=1}^m\prod\limits_{p\in A_v}\us{n+r-1}{p}(\z{n+r}{p})'\bigg) \end{align*} where 
\[
\us{j}{p}:=\frac{\C{j}{p}}{\C{j}{p}\bb{1}'}
\]
and $\C{l}{p}$'s are recursively defined as 
\[
\C{l}{p}=\C{l-1}{p}+\sum\limits_{q:q\rightsquigarrow p}\z{l}{q}R^{(q,p)}.
\]
Similarly 
\[
\Qr\bigg(\Z{j}{p}=\z{j}{p}, p\in A_v, j\leq m+n\bigg)=\prod\limits_{j=1}^{m+n}\prod\limits_{p\in A_v}\us{j-1}{p}(\z{j}{p})'.
\]
Taking the ratio of the above expressions yields \begin{align*} &\Qr\bigg(\Z{j}{v}=\z{j}{v}, j\leq n~\bigg|~\Z{l}{p}=\z{l}{p}, l\leq m+n, p\in A_v\bigg)\\ =~&\frac{\bigg(\prod\limits_{l=1}^n\prod\limits_{p\in A_v\cup\{v\}}\us{l-1}{p}(\z{l}{p})'\bigg)\bigg(\prod\limits_{r=1}^m\prod\limits_{p\in A_v}\us{n+r-1}{p}(\z{n+l}{p})'\bigg)}{\prod\limits_{j=1}^{m+n}\prod\limits_{p\in A_v}\us{j-1}{p}(\z{j}{p})'}\\ =~&\prod\limits_{j=1}^n\us{j-1}{v}(\z{j}{v})'\\ =~&\Q\bigg(\Z{j}{v}=\z{j}{v}, j\leq n~\bigg|~\U{l}{v}=\us{l}{v}, l<m+n\bigg) \end{align*} last equality follows because 
$$\sigma\big(\U{l}{v} : l<m+n\big)\subseteq\sigma\big(\Z{l}{p},\C{0}{q},R^{\e} : l\leq m+n, p\in A_v, q\in V, \e\in\E\big)
$$ 
hence we get the following \begin{equation} \Q\bigg(\Z{j}{v}=\z{j}{v}, j\leq n~\bigg|~\U{l}{v},~ l<m+n\bigg)=\prod\limits_{j=1}^n\U{j-1}{v}(\z{j}{v})' \end{equation} letting $m\longrightarrow\infty$ and using Doob's forward convergence theorem, the lemma follows.
\end{proof}

Now that we have independence, it becomes much easier to handle the situation. The following lemma explains the strong asymptotic behavior of the color-count statistic for any non-generator urn.

\begin{lemma}\label{lem:2}
    For $v\in V\setminus G$,
    \begin{equation}
    \frac{1}{n}\bigg(\sum\limits_{j=1}^n\Z{j}{v}-\sum\limits_{j=1}^n\U{j-1}{v}\bigg)\xrightarrow[]{a.s.}0,\qquad\text{as}~n\rightarrow\infty
    \end{equation}
\end{lemma}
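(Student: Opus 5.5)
Lemma~\ref{lem:2} is a strong law for the martingale differences $\Z{j}{v}-\U{j-1}{v}$. Lemma~\ref{lem:1} is what makes these genuine martingale differences with respect to a filtration that already contains the whole composition path of urn $v$. The plan is to condition on $\mathcal{H}:=\sigma(\U{l}{v},\,l\ge0)$ and work under the resulting conditional law.

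By Lemma~\ref{lem:1}, conditionally on $\mathcal{H}$, the draws $\Z{1}{v},\Z{2}{v},\ldots$ are independent, and $\Z{j}{v}$ has conditional law $\U{j-1}{v}$, which is $\mathcal{H}$-measurable. (Finite-dimensional marginals determine the law on the product space, so independence of the whole sequence follows.) Define
\[
\xi_j:=\Z{j}{v}-\U{j-1}{v}.
\]
Then, under the regular conditional probability given $\mathcal{H}$, the $\xi_j$ are independent, mean-zero, and bounded coordinatewise by $1$. Hence $\sum_j \xi_j/j$ has summable conditional second moments, since $\sum_j \mathbb{E}[\|\xi_j\|^2\mid\mathcal{H}]/j^2\le 2\sum_j j^{-2}<\infty$. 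Kolmogorov's two-series (or $L^2$-martingale convergence) theorem then gives that $\sum_j \xi_j/j$ converges almost surely under the conditional law. Kronecker's lemma yields $\frac1n\sum_{j=1}^n\xi_j\to0$ almost surely under the conditional law, for almost every realization of $\mathcal{H}$.

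Finally I integrate out the conditioning. The event $\{\frac1n\sum_{j\le n}\xi_j\to0\}$ is measurable, and its conditional probability given $\mathcal{H}$ equals $1$ almost surely. Taking expectations gives $\Q=1$, which is the claim. Alternatively, one can avoid regular conditional probabilities by introducing the filtration $\mathcal{G}_j:=\mathcal{H}\vee\sigma(\Z{i}{v},i\le j)$. Lemma~\ref{lem:1} implies $\mathbb{E}[\Z{j}{v}\mid\mathcal{G}_{j-1}]=\U{j-1}{v}$, so $(\xi_j)$ is a bounded martingale difference sequence for $(\mathcal{G}_j)$, and the martingale strong law applies directly.

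The main obstacle is justifying this step: deducing from Lemma~\ref{lem:1}, which is stated only for finite-dimensional joint probabilities of the draws given $\mathcal{H}$, that $\mathbb{E}[\Z{j}{v}\mid\mathcal{G}_{j-1}]=\U{j-1}{v}$. I would do this by verifying $\mathbb{E}[\Z{j}{v}\mathbbm{1}_{B}\mathbbm{1}_{\{\Z{i}{v}=z_i,\,i<j\}}]=\mathbb{E}[\U{j-1}{v}\mathbbm{1}_{B}\mathbbm{1}_{\{\Z{i}{v}=z_i,\,i<j\}}]$ for $B\in\mathcal{H}$. This follows from the product formula of Lemma~\ref{lem:1} by summing over $z_j$, and a $\pi$--$\lambda$ argument then extends it to all of $\mathcal{G}_{j-1}$.
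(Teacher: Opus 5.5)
Your proof is correct and follows the same route as the paper. Both condition on $\sigma(\U{l}{v}:l\ge0)$, use Lemma~\ref{lem:1} to get conditionally independent draws with $\Z{j}{v}$ having conditional law $\U{j-1}{v}$, and then prove a strong law for the bounded centered summands; the only difference is that you use Kolmogorov's series criterion with Kronecker's lemma, where the paper applies Hoeffding's inequality to the conditional probabilities and then Borel--Cantelli.

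Also, the obstacle you flag can be avoided entirely. By the drawing rule itself, $\Z{j}{v}-\U{j-1}{v}$ is already a bounded martingale difference for the filtration generated by the initial data and all draws up to time $j$. So the martingale strong law gives the claim directly, without Lemma~\ref{lem:1}, and in fact for every $v\in V$.
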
     
\begin{proof}
Note that for $v\in S$, conditioned on $\sigma\big(\U{m}{v} : m\geq0\big)=\sigma\big(\U{0}{v}\big)$, the drawings $\Z{1}{v},\Z{2}{v},...$ are i.i.d. random vectors, so the lemma follows by strong law of large numbers.

For $v\in V\setminus G,~n\geq1$ and $\epsilon>0$,
\begin{align*}
&\Q\left(\left\lVert\sum\limits_{j=1}^n\Z{j}{v}-\sum\limits_{j=1}^n\U{j-1}{v}\right\rVert>n\epsilon\,\middle|\,~\U{m}{v}, m\geq0\right)\\
    \leq~&\sum\limits_{l=1}^k\Q\left(\left|\sum\limits_{j=1}^n\Z{j}{v}(l)-\sum\limits_{j=1}^n\U{j-1}{v}(l)\right|>\frac{n\epsilon}{\sqrt{k}}\,\middle|\,\U{m}{v}, m\geq0\right)\\
    \leq~&2k\exp{\left(-\frac{2n\epsilon^2}{k}\right)}
\end{align*}
Since by Lemma \ref{lem:1}, conditioned on $\sigma\big(\U{m}{v} : m\geq0\big)$, the random variables $\Z{1}{v}(l),\Z{2}{v}(l),...$ are independent and $\Z{j}{v}(l)$ has a Bernoulli distribution with parameter $\U{j-1}{v}(l)$, for all $j\geq1$, so that the last step follows by the Hoeffding's inequality.

Finally note that
\begin{equation}
\sum\limits_{n=1}^{\infty}\Q\bigg(\bigg\lVert\sum\limits_{i=1}^n\Z{i}{v}-\sum\limits_{i=1}^n\U{i-1}{v}\bigg\rVert>n\epsilon\bigg)~<~\infty
\end{equation}
hence the lemma follows by Borel-Cantelli lemma. 
\end{proof} 

\subsection*{Proof of \autoref{thm:strongconvergence}} We will prove the theorem for the sum-reinforcement scheme only. 

For a directed path $l=v_1v_2...v_n~(n\geq2)$ we denote the number of distinct vertices in $l$ by $|l|$. For two  vertices $u,v$ and a path $l=v_1v_2...v_n$ we write $u\xrightarrow{l}v$ if $v_1=u$ and $v_n=v$.

Now for $v\in G\cup S$ the urn-proportions $\U{n}{v}$ either remain fixed at $\U{0}{v}$ (if $v\in S$) or converge almost surely to $\U{\infty}{v}:=\N{\infty}{v}R^{(v,v)}/\N{\infty}{v}R^{(v,v)}\mathbbm{1}'$ (if $v\in G$) by Assumption \ref{asm:strongconvergence}.

Note that 
\[
V\setminus(G\cup S)=\bigcup\limits_{j=2}^{\infty}V_j
\]
where 
\[V_j:=\bigg\{v\in V:\max\{|l| : \exists p\in G\cup S~\text{with}~p\xrightarrow{l}v\}=j\bigg\}
\] 
for all $j\geq2$. Clearly $V_j\cap V_{j'}=\emptyset$, whenever $j\neq j'.$ We will use induction to prove that for all $v\in\bigcup\limits_{j=2}^{\infty}V_j$ the urn-proportions $\U{n}{v}$ will admit almost sure limits $\U{\infty}{v}$ which satisfy \eqref{lim}. First let $v\in V_2$ then clearly
\begin{equation}\frac{1}{n}\C{n}{v}=\frac{1}{n}\C{0}{v}+\sum\limits_{u:u\rightsquigarrow v,u\in G}\N{n}{u}R^{(u,v)}+\sum\limits_{u:u\rightsquigarrow v,u\in S}\N{n}{u}R^{(u,v)}
\end{equation}
since $\N{n}{u}\xrightarrow{a.s.}\N{\infty}{u}$ for all $u\in G$ (Assumption \ref{asm:strongconvergence}) and $\N{n}{u}\xrightarrow{a.s.}\U{0}{u}$ for all $u\in S$ (by strong law of large numbers) we have  
\begin{equation}\frac{1}{n}\C{n}{v}\xrightarrow{a.s.}\sum\limits_{u:u\rightsquigarrow v,u\in G}\N{\infty}{u}R^{(u,v)}+\sum\limits_{u:u\rightsquigarrow v,u\in S}\U{0}{u}R^{(u,v)}
\end{equation}
now let \begin{equation}\U{\infty}{v}:=\frac{\sum\limits_{u:u\rightsquigarrow v,u\in G}\N{\infty}{u}R^{(u,v)}+\sum\limits_{u:u\rightsquigarrow v,u\in S}\U{0}{u}R^{(u,v)}}{\sum\limits_{u:u\rightsquigarrow v,u\in G}\N{\infty}{u}R^{(u,v)}\mathbbm{1}'+\sum\limits_{u:u\rightsquigarrow v,u\in S}\U{0}{u}R^{(u,v)}\mathbbm{1}'}\end{equation}
then clearly $\U{n}{v}=(\C{n}{v}\mathbbm{1}')^{-1}\C{n}{v}\xrightarrow{a.s.}\U{\infty}{v}$. Hence \autoref{thm:strongconvergence} holds for all $v\in V_2$. Now assume that for some $m\geq2$,  for all $u\in \bigcup\limits_{j=2}^m V_j$ the urn proportions $\U{n}{u}$ converge to limit $\U{\infty}{u}$ which satisfy \eqref{lim}. Let $v\in V_{m+1}$ and write 
\begin{equation}
\frac{1}{n}\C{n}{v}=\frac{1}{n}\C{0}{v}+A_n+B_n 
\end{equation}
where 
\begin{equation}
\begin{split}A_n &=\sum\limits_{u:u\rightsquigarrow v,u\in G}\N{n}{u}R^{(u,v)}+\sum\limits_{u:u\rightsquigarrow v,u\in V\setminus G}\left(\frac{1}{n}\sum\limits_{j=1}^n\U{j-1}{u}\right)R^{(u,v)}\\
&=\sum\limits_{u:u\rightsquigarrow v,u\in G}\N{n}{u}R^{(u,v)}+\sum\limits_{u:u\rightsquigarrow v,u\in\bigcup\limits_{l=2}^m V_l}\left(\frac{1}{n}\sum\limits_{j=1}^n\U{j-1}{u}\right)R^{(u,v)}
\end{split}
\end{equation} 
and 
\begin{equation}B_n=\sum\limits_{u:u\rightsquigarrow v,u\in V\setminus G}\frac{1}{n}\sum\limits_{j=1}^n\left(\Z{j}{u}-\U{j-1}{u}\right)R^{(u,v)}.
\end{equation}
Note that by our inductive assumption and by Assumption \ref{asm:strongconvergence} we get 
\begin{equation}A_n\xrightarrow{a.s.}\sum\limits_{u:u\rightsquigarrow v,u\in G}\C{\infty}{u}R^{(u,v)}+\sum\limits_{u:u\rightsquigarrow v,u\in V\setminus G}\U{\infty}{u}R^{(u,v)}
\end{equation} also by Lemma \ref{lem:2}
\begin{equation}
B_n\xrightarrow{a.s.}0
\end{equation}
now define 
\begin{equation}
\U{\infty}{v}:=\frac{\sum\limits_{u:u\rightsquigarrow v,u\in G}\N{\infty}{u}R^{(u,v)}+\sum\limits_{u:u\rightsquigarrow v,u\in V\setminus G}\U{\infty}{u}R^{(u,v)}}{\sum\limits_{u:u\rightsquigarrow v,u\in G}\N{\infty}{u}R^{(u,v)}\mathbbm{1}'+\sum\limits_{u:u\rightsquigarrow v,u\in V\setminus G}\U{\infty}{u}R^{(u,v)}\mathbbm{1}'}
\end{equation}
then clearly $\U{n}{v}\xrightarrow{a.s.}\U{\infty}{v}$. Hence the statement of \autoref{thm:strongconvergence} holds for all $v\in\bigcup\limits_{j=2}^{m+1}V_j$ and hence by induction it holds for all $v\in\bigcup\limits_{j=2}^{\infty}V_j$ .
\qed

\subsection*{Proof of Corollary \ref{cor:synchronization}} The urn at the generator vertex $g\in G$ is a classical multicolor P\'olya urn
with initial configuration $(n_1,\ldots,n_k)$. It is well known that
$\N{n}{g}\xrightarrow{a.s.}\U{\infty}{g}$, where $\U{\infty}{g}$ follows a
Dirichlet distribution with parameter $(n_1,\ldots,n_k)$. Hence
Assumption~\ref{asm:strongconvergence} holds automatically. The recursion~\eqref{lim} then implies, by
induction, that $\U{\infty}{v}=\U{\infty}{g}$ for all $v\in V$.\qed

\subsection*{Proof of Corollary \ref{cor:deterministiclimit}} Under Assumption~\ref{asm:deterministiclimit}, each generator $g\in G$ is a generalized P\'olya urn with
balanced and irreducible replacement matrix $R^{(g,g)}$, implying
$\N{n}{g}\xrightarrow{a.s.}\us{}{g}$ (see
\cite{athreya1968embedding, gouet1997strong, gangopadhyay2022almost}). Thus
Assumption~\ref{asm:strongconvergence} holds, and the recursion~\eqref{lim} directly yields~\eqref{11}.
\qed
\\\\
We now proceed to the proof of \autoref{thm:rate}. Recall the definitions of $U_n$, $H$,
$D_u$, and $\Gamma_u$ from \autoref{subsec:rate}. The following lemma describes the
relationship between the eigen-structure of the interacting replacement matrix
$H$ and that of the exponent matrix $D_u$.

\begin{lemma}\label{lem:3}
The spectrum of the exponent matrix $D_u$ is given by
\[
Sp(D_u)=
\begin{cases}
\{1\}\cup\{1-\lambda:\lambda\in Sp(H)\setminus\{1\}\}, & \text{if } m_1\neq0,\\[0.25cm]
\{1\}, & \text{if } m_1=0.
\end{cases}
\]
Moreover, the algebraic multiplicity of the eigenvalue $1$ of $D_u$ equals
$m_1+m_2k+m_3k$, while for any eigenvalue $\mu\neq1$ of $D_u$, its algebraic
multiplicity is $\nu(1-\mu)$.
\end{lemma}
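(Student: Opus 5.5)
Since $D_u$ is block upper triangular, the plan is to reduce everything to its diagonal blocks and then to the individual $k\times k$ blocks of $D$. If $m_1=0$ we have $D_u=I_{mk}$ by definition, and there is nothing to prove. If $m_1\neq0$, the displayed forms of $D_u$ show that its diagonal blocks are $D$, $I_{m_2k}$ (if $m_2\neq0$) and $I_{m_3k}$. Hence
\[
\det(xI-D_u)=\det(xI-D)\,(x-1)^{m_2k+m_3k}.
\]
Also $D=\operatorname{diag}(D^{(1)},\ldots,D^{(m_1)})$ with $D^{(j)}:=I_k-H^{(j,j)}+\bb{1}'u^{(j)}$. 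So it suffices to compute the characteristic polynomial of each $D^{(j)}$, $1\le j\le m_1$.

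Fix $j\le m_1$. By \ref{b2}, $H^{(j,j)}=R^{(j,j)}/r((j,j))$ is row-stochastic, so $H^{(j,j)}\bb{1}'=\bb{1}'$. By \eqref{u1}, $u^{(j)}$ is the left Perron eigenvector normalized so that $u^{(j)}\bb{1}'=1$. By \ref{b3} and Perron--Frobenius, the eigenvalue $1$ of $H^{(j,j)}$ is algebraically simple.

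The key step is a Brauer-type rank-one deflation. Choose an invertible $P\in\mathbb{R}^{k\times k}$ whose first column is $\bb{1}'$. Then $P^{-1}\bb{1}'=e_1'$, and therefore
\[
P^{-1}H^{(j,j)}P=\begin{bmatrix}1 & *\\ 0 & B_j\end{bmatrix},\qquad
P^{-1}\bb{1}'u^{(j)}P=e_1'\,(u^{(j)}P)=\begin{bmatrix}1 & *\\ 0 & 0\end{bmatrix}.
\]
The $(1,1)$ entry of the second matrix is $u^{(j)}\bb{1}'=1$. Subtracting, we get
\[
P^{-1}D^{(j)}P=\begin{bmatrix}1 & *\\ 0 & I_{k-1}-B_j\end{bmatrix}.
\]
Here $Sp(B_j)$, counted with algebraic multiplicity, is $Sp(H^{(j,j)})$ with one copy of $1$ removed. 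By simplicity of the Perron root, $1\notin Sp(B_j)$. Consequently
\[
\det(xI-D^{(j)})=(x-1)\prod_{\lambda\in Sp(H^{(j,j)})\setminus\{1\}}(x-1+\lambda)^{\nu_j(\lambda)}.
\]

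Multiplying over $j$ and with the factor $(x-1)^{m_2k+m_3k}$ gives
\[
\det(xI-D_u)=(x-1)^{m_1+m_2k+m_3k}\prod_{\lambda\in Sp(H)\setminus\{1\}}(x-(1-\lambda))^{\nu(\lambda)}.
\]
Here the Remark identifying $Sp(H)=\bigcup_{j}Sp(H^{(j,j)})$ is used, and $\nu(\lambda)$ is by definition the sum of the $\nu_j(\lambda)$. This identity gives the spectrum and the multiplicity $\nu(1-\mu)$ for every eigenvalue $\mu\neq1$ of $D_u$.

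The main obstacle is the multiplicity of the eigenvalue $1$. The factors $(x-1+\lambda)$ with $\lambda=0\in Sp(H^{(j,j)})$ also vanish at $x=1$. So the exact count $m_1+m_2k+m_3k$ holds precisely when $0\notin Sp(H)$. Otherwise the count must be read as $m_1+m_2k+m_3k+\nu(0)$, which is consistent with the convention $\nu_1=mk$ used for $J_1$ above. I would handle this by stating the factorization above as the precise content of the lemma and noting that the two counts agree when $0\notin Sp(H)$. The other delicate point is the use of simplicity of the Perron root to ensure $1\notin Sp(B_j)$, which is exactly where irreducibility \ref{b3} enters.
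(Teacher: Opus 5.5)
Your argument is correct and has the same structure as the paper's. Block-triangularity reduces everything to the generator blocks $I_k-H^{(j,j)}+\bb{1}'u^{(j)}$, and each is then conjugated by a matrix whose first column is $\bb{1}'$. The only difference is the choice of that matrix. The paper takes a full Jordan basis of $H^{(j,j)}$, so $u^{(j)}$ is the first row of $P^{-1}$ and the conjugate is exactly $\operatorname{diag}(1,I-J_2,\ldots,I-J_l)$, which also carries over the Jordan structure. Your arbitrary completion gives only a block-triangular form. That is all the characteristic polynomial needs, and it uses only $u^{(j)}\bb{1}'=1$, not the left-eigenvector property of $u^{(j)}$.

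Your caveat about the eigenvalue $1$ is right, and the paper's proof does not handle it. For $k=2$ and $H^{(j,j)}=\tfrac12\bb{1}'\bb{1}$, one gets $u^{(j)}=(\tfrac12,\tfrac12)$ and $I_2-H^{(j,j)}+\bb{1}'u^{(j)}=I_2$. So the stated multiplicity $m_1+m_2k+m_3k$ holds exactly when $\nu(0)=0$.

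State that condition as ``no generator block $H^{(j,j)}$ has eigenvalue $0$'' rather than as $0\notin Sp(H)$. When $m_3\ge1$, the last diagonal block of $H$ always contributes the eigenvalue $0$. The paper's Remark identifying $Sp(H)$ overlooks this, though it is harmless for the set $Sp(D_u)$ because $1-0=1$.

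Finally, drop the claim that $m_1+m_2k+m_3k+\nu(0)$ is consistent with $\nu_1=mk$. The two differ by $\sum_{\lambda\notin\{0,1\}}\nu(\lambda)$. The paper's definition $\nu_1=m_1k+m_2k+m_3k$ is a separate slip, not support for your count.
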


\begin{proof}
For $m_1=0$ there is nothing to prove so without loss assume that $m_1\neq0$. For each
$j\in\{1,\ldots,m_1\}$, the matrix $H^{(j,j)}$ is row-stochastic. Since $D_u$ is
block upper triangular, its characteristic polynomial is the product of the
characteristic polynomials of its diagonal blocks, which are either of the form
$I_k-H^{(j,j)}+\bb{1}'\us{}{j}$ or $I_k$. Thus, it suffices to establish the
following claim.

\smallskip
\noindent
\emph{Claim.} Let $A\in\mathbb{R}^{k\times k}$ be an irreducible row-stochastic
matrix with distinct eigenvalues $\lambda_1,\ldots,\lambda_l$, dominant left
eigenvector $a$ (normalized so that $a\bb{1}'=1$), and characteristic polynomial
\[
p_A(x)=(x-\lambda_1)^{p_1}(x-\lambda_2)^{p_2}\cdots(x-\lambda_l)^{p_l}.
\]
Then the characteristic polynomial of
\[
K:=I_k-A+\bb{1}'a
\]
is given by
\[
p_K(x)=(x-\lambda_1)^{p_1}(x-1+\lambda_2)^{p_2}\cdots(x-1+\lambda_l)^{p_l}.
\]

\smallskip
By the Perron--Frobenius theorem, we may assume $\lambda_1=1$ and $p_1=1$. Let
\[
P^{-1}AP=\text{diag}(J_1,J_2,\ldots,J_l)
\]
be the Jordan decomposition of $A$, where $J_1=1$ and $J_2,\ldots,J_l$ are the
Jordan blocks corresponding to $\lambda_2,\ldots,\lambda_l$. Since the rows of
$P^{-1}$ and the columns of $P$ are respectively the left and right (possibly
generalized) eigenvectors of $A$, and since $a$ and $\bb{1}'$ are respectively the
left and right eigenvectors associated with $\lambda_1=1$, it follows that
\[
P^{-1}\bb{1}'=e_1' \quad\text{and}\quad aP=e_1.
\]
Consequently,
\[
\begin{aligned}
P^{-1}KP
&=P^{-1}\big(I_k-A+\bb{1}'a\big)P \\
&=\text{diag}(0,I_{p_2}-J_2,\ldots,I_{p_l}-J_l)+e_1'e_1 \\
&=\text{diag}\big(1,I_{p_2}-J_2,\ldots,I_{p_l}-J_l\big).
\end{aligned}
\]
Therefore,
\[
\begin{aligned}
p_K(x)
&=\det(xI_k-K) \\
&=\det\!\big(\text{diag}(x-1,(x-1)I_{p_2}+J_2,\ldots,(x-1)I_{p_l}+J_l)\big) \\
&=(x-1)(x-1+\lambda_2)^{p_2}\cdots(x-1+\lambda_l)^{p_l},
\end{aligned}
\]
which proves the claim and hence the lemma.
\end{proof}

We are now in a position to prove \autoref{thm:rate}. The key idea is to reformulate the
process $\{U_n : n\geq1\}$ as a stochastic approximation scheme with drift function $-h$,
where the Jacobian $Dh(u)$ at the limiting point $u$ has eigenvalues with strictly
positive real parts. This allows us to apply the fluctuation results of
Theorems~2.1,2.2 and 2.3 in \cite{zhang2016central}.

\subsection*{Proof of \autoref{thm:rate}}

We first introduce a matrix-valued map
\[
\omega:\mathbb{R}^{mk}\longrightarrow\mathbb{R}^{mk\times mk}.
\]
For $x=(x_1,\ldots,x_m)\in\mathbb{R}^{mk}$, where
$x_j\in\mathbb{R}^k$ for each $j=1,\ldots,m$, define
\begin{equation}
\begin{split}
\omega(x)
&:=
\operatorname{diag}\Big(
\operatorname{diag}(x_1\bb{1}_k'\bb{1}_k),
\ldots,
\operatorname{diag}(x_m\bb{1}_k'\bb{1}_k)
\Big)\\
&=
\begin{bmatrix}
P_1 & 0 & \cdots & 0\\
0 & P_2 & \cdots & 0\\
\vdots & \vdots & \ddots & \vdots\\
0 & 0 & \cdots & P_m
\end{bmatrix},
\end{split}
\end{equation}
where
\begin{equation}
\begin{split}
P_j
&:=\operatorname{diag}(x_j\mathbbm{1}_k'\mathbbm{1}_k)\\
&=
\begin{bmatrix}
x_j\mathbbm{1}_k' & 0 & \cdots & 0\\
0 & x_j\mathbbm{1}_k' & \cdots & 0\\
\vdots & \vdots & \ddots & \vdots\\
0 & 0 & \cdots & x_j\mathbbm{1}_k'
\end{bmatrix}.
\end{split}
\end{equation}

Let $E_n$ be the $mk$-dimensional vector defined by
\begin{equation}
E_n:=
\begin{cases}
\big(
\C{n}{1},\ldots,\C{n}{m_1},
\N{n}{m_1+1},\ldots,\N{n}{m_1+m_2},\\
\qquad\qquad\qquad\qquad\quad
\C{n}{m_1+m_2+1},\ldots,\C{n}{m}
\big),
&\text{if}~ m_1\neq0,\ m_2\neq0,\\[0.25cm]
\big(
\C{n}{1},\ldots,\C{n}{m_1},
\C{n}{m_1+1},\ldots,\C{n}{m}
\big),
&\text{if}~m_1\neq0,\ m_2=0,\\[0.25cm]
\big(
\N{n}{1},\ldots,\N{n}{m_2},
\C{n}{m_2+1},\ldots,\C{n}{m}
\big),
&\text{if}~ m_1=0,\ m_2\neq0.
\end{cases}
\end{equation}
It can be checked that
\begin{equation}
E_n=U_n\omega(E_n).
\end{equation}
Moreover,
\begin{equation}\label{bal}
\omega(E_{n+1})-\omega(E_n)=B,
\end{equation}
where
\begin{equation}
B=\operatorname{diag}(b_1,\ldots,b_m),
\end{equation}
and $b_j$ denotes the total balance of vertex $j$; see
Corollary~\ref{cor:deterministiclimit}.

Next, let
\begin{equation}
Z_n:=(\Z{n}{1},\ldots,\Z{n}{m})
\end{equation}
and define the filtration
\begin{equation}
\F{n}:=\sigma(Z_j:j\leq n).
\end{equation}
By the dynamics of the sum-reinforcement scheme,
\begin{equation}
\mathbb{E}\left[Z_{n+1}\mid\F{n}\right]
=
(\U{n}{1},\ldots,\U{n}{m})
=:\hat{U}_n.
\end{equation}
Furthermore, by \eqref{2},
\begin{align}
& E_{n+1} = E_n+Z_{n+1}HB\\[0.25cm]
\Leftrightarrow~& U_{n+1}\omega(E{n+1}) = U_n\omega(E_n)+Z_{n+1}HB\\[0.25cm]
\Leftrightarrow~& (U_{n+1}-U_n)\omega(E{n+1}) = -U_nB+Z_{n+1}HB
\end{align}

We can therefore write the dynamics of $U_n$ in the form of a stochastic
approximation scheme:
\begin{equation}\label{stochapprox}
U_{n+1}-U_n
=
\frac{1}{n+1}
\big(
-h(U_n)+\Delta M_{n+1}+R_{n+1}
\big),
\qquad\forall n\geq0,
\end{equation}
where $h:\mathbb{R}^{mk}\to\mathbb{R}^{mk}$ is defined by
\begin{equation}
h(x)=x-2\bar{x}H+\bar{x}H\omega(\bar{x}),
\end{equation}
and, for
$x=(x_1,\ldots,x_m)\in\mathbb{R}^{mk}$ with
$x_j\in\mathbb{R}^k$, we set
\begin{equation}
\bar{x}
=
\begin{cases}
\big(
x_1,\ldots,x_{m_1},
\U{0}{m_1+1},\ldots,\U{0}{m_1+m_2},\\
\qquad\qquad\qquad\qquad\qquad x_{m_1+m_2+1},\ldots,x_m
\big),
&\text{if}~ m_1\neq0,\ m_2\neq0,\\[0.25cm]
\big(
\U{0}{1},\ldots,\U{0}{m_2},
x_{m_2+1},\ldots,x_m
\big),
&\text{if}~ m_1=0,\ m_2\neq0,\\[0.25cm]
x,
&\text{if}~ m_2=0.
\end{cases}
\end{equation}

The sequence $\{\Delta M_{n+1}:n\ge0\}$ is an
$\F{n+1}$-adapted martingale difference sequence given by
\begin{equation}
\Delta M_{n+1}
=
(Z_{n+1}-\hat{U}_n)H,
\end{equation}
while $R_{n+1}$ is the $\F{n+1}$-adapted error sequence defined by
\begin{equation}
R_{n+1}
=
\big(-U_n+Z_{n+1}H\big)
\big((n+1)\omega(E_{n+1})^{-1}-B^{-1}\big).
\end{equation}
Since the urn-proportions take values in a compact set, there exist
non-random positive constants $c_1,c_2$ such that
\begin{equation}
\lVert\Delta M_n\rVert\leq c_1
\qquad a.s.,
\end{equation}
and, by \eqref{bal},
\begin{equation}\label{rem}
\lVert R_n\rVert\leq\frac{c_2}{n}
\qquad a.s.
\end{equation}

We next verify the conditions concerning the martingale difference
sequence. Since, conditionally on $\F{n}$, the components of $Z_{n+1}$
are independent, its conditional covariance matrix is
\begin{equation}
\begin{split}
\operatorname{Var}(Z_{n+1}\mid\F{n})
&=
\mathbb{E}\left[
(Z_{n+1}-\hat{U}_n)'(Z_{n+1}-\hat{U}_n)
\mid\F{n}
\right]\\
&=
\operatorname{diag}\big(
\operatorname{Var}(\Z{n+1}{1}\mid\F{n}),
\ldots,
\operatorname{Var}(\Z{n+1}{m}\mid\F{n})
\big)\\
&=
\operatorname{diag}\big(
\Gamma(\U{n}{1}),\ldots,\Gamma(\U{n}{m})
\big),
\end{split}
\end{equation}
where $\Gamma(\cdot)$ is defined in \eqref{Gamma}. Since
$U_n\xrightarrow{a.s.}u$, we have
\begin{equation}
\operatorname{Var}(Z_{n+1}\mid\F{n})
\xrightarrow{a.s.}\Gamma_u,
\end{equation}
and hence
\begin{equation}\label{martingalediff}
\begin{split}
\mathbb{E}\left[
(\Delta M_{n+1})'\Delta M_{n+1}
\mid\F{n}
\right]
&=
H'\operatorname{Var}(Z_{n+1}\mid\F{n})H\\
&\xrightarrow{a.s.}
H'\Gamma_uH.
\end{split}
\end{equation}

We now focus on the function $h$. Observe that there exist
$mk\times mk$ real matrices $A_1,\ldots,A_{mk}$ such that
\begin{equation}
h(x)
=
\big(
xA_1x',xA_2x',\ldots,xA_{mk}x'
\big),
\qquad\forall x\in\mathbb{R}^{mk}.
\end{equation}
Thus, $h$ is continuously differentiable, and there exists a constant
$C>0$ such that
\begin{equation}\label{normeq}
\lVert h(x)\rVert\leq C\lVert x\rVert^2.
\end{equation}
The derivative of $h$ at $u$ is
\[
Dh(u)=2\big(A_1u',\ldots,A_{mk}u'\big).
\]
Consequently, by \eqref{normeq},
\begin{equation}\label{grad}
h(x)
=
h(u)+(x-u)Dh(u)
+O\big(\lVert x-u\rVert^2\big),
\qquad\text{as}~x\rightarrow u.
\end{equation}

For $x=(x_1,\ldots,x_m)\in\mathbb{R}^{mk}$, write
\[
h(x)=\big(h_1(x),\ldots,h_m(x)\big).
\]
Then
\begin{equation}
h_j(x)=
\begin{cases}
x_j-2x_jH^{(j,j)}
+(x_j\bb{1}')x_jH^{(j,j)},
&\text{if}~ 1\leq j\leq m_1,\\[0.25cm]
x_j-\U{0}{j},
&\text{if}~ m_1+1\leq j\leq m_1+m_2,\\[0.25cm]
x_j
-2\sum_{l=1}^{m_1}x_lH^{(l,j)}\\[0.15cm]
\qquad-2\sum_{l=m_1+1}^{m_1+m_2}\U{0}{l}H^{(l,j)}
\\[0.15cm]
\qquad
+\sum_{l=1}^{m_1}(x_j\bb{1}')x_lH^{(l,j)}
\\[0.15cm]
\qquad+\sum_{l=m_1+1}^{m_1+m_2}
(x_j\bb{1}')\U{0}{l}H^{(l,j)},
&\text{if}~ m_1+m_2+1\leq j\leq m.
\end{cases}
\end{equation}

It can be checked that the derivative of $h$ at $x$ is the $mk\times mk$ block matrix
\[
Dh(x)
=
\left(
\frac{\partial h_j}{\partial x_l}(x)
\right)_{1\leq l,j\leq m},
\]
where
\begin{equation}\label{derivative}
\frac{\partial h_j}{\partial x_l}(x)
=
\begin{cases}
I_k-2H^{(j,j)}
+\bb{1}'x_jH^{(j,j)}
+(x_j\bb{1}')H^{(j,j)},
&\text{if}~ l=j\leq m_1,\\[0.25cm]
I_k,
&\text{if}~ l=j>m_1,\\[0.25cm]
-2H^{(l,j)}
+(x_j\bb{1}')H^{(l,j)},
&\text{if}~ l\leq m_1,\\&j>m_1+m_2,\\[0.25cm]
0,
& \text{otherwise}.
\end{cases}
\end{equation}
By the definition of the exponent matrix $D_u$, it follows from
\eqref{derivative} that
\begin{equation}
Dh(u)=D_u.
\end{equation}

Moreover, by \eqref{u1}, \eqref{u2}, and \eqref{u3},
\begin{equation}
u(I_{mk}-H)=0.
\end{equation}
Therefore,
\begin{align}
h(u)
&=u-2uH+uH\omega(u)\\
&=u-uH\\
&=0.
\end{align}
Thus, $u$ is an equilibrium point of the equation $h(x)=0$.
By Lemma~\ref{lem:3}, all eigenvalues of the derivative $D_u$ at $u$
have positive real parts. Hence, Assumption~2.1 of
\cite{zhang2016central} is satisfied. Moreover, \eqref{grad} shows that
Assumption~2.2 of \cite{zhang2016central} is satisfied for every
$\epsilon\in(0,1)$.

Since the martingale differences $\Delta M_n$ are bounded, we immediately
obtain
\begin{equation}
\frac{1}{n}\sum_{j=1}^n
\mathbb{E}\left[
\lVert\Delta M_j\rVert^2
\mathbbm{1}_{\{\lVert\Delta M_j\rVert\geq\epsilon\sqrt{n}\}}
\,\middle|\,\F{j-1}
\right]
\xrightarrow{a.s.}0.
\end{equation}
Furthermore, by \eqref{martingalediff},
\begin{equation}
\frac{1}{n}\sum_{j=1}^n
\mathbb{E}\left[
(\Delta M_j)'\Delta M_j
\mid\F{j-1}
\right]
\xrightarrow{a.s.}
H'\Gamma_uH.
\end{equation}
Thus, Assumption~2.3 of \cite{zhang2016central} is also satisfied with
\[
\Gamma=H'\Gamma_uH.
\]

Finally, by \eqref{rem},
\begin{equation}\label{rem2}
\sum_{j=1}^nR_j=O(\log n)
\qquad{a.s.}
\end{equation}

Let
\begin{equation}
\rho:=\min\{\Re(\mu):\mu\in\operatorname{Sp}(D_u)\}.
\end{equation}

We now distinguish the different cases according to the value of $\rho$.

\paragraph{Case 1: $G_v=\emptyset$.}
If $G_v=\emptyset$ (equivalently, $m_1=0$), then
$D_u=I_{mk}$ and hence
\[
\rho=1>\frac12.
\]
Therefore, all the conditions of Theorem~2.3 of
\cite{zhang2016central} are satisfied. Consequently,
\begin{equation}
\sqrt{n}(U_n-u)
\stackrel{d}{\longrightarrow}
\mathcal{N}_{mk}(0,\Sigma),
\qquad \text{as}~n\rightarrow\infty,
\end{equation}
where
\begin{equation}
\begin{split}
\Sigma
&=
\int_0^\infty
\big(e^{(\frac12I_{mk}-D_u)x}\big)'
H'\Gamma_uH
e^{(\frac12I_{mk}-D_u)x}\,dx\\[0.25cm]
&=
\left(\int_0^\infty e^{-x}\,dx\right)
H'\Gamma_uH\\[0.25cm]
&=
H'\Gamma_uH.
\end{split}
\end{equation}
This proves the first assertion.

\paragraph{Case 2: $G_v\neq\emptyset$ and $\gamma<\frac12$.}
Suppose that $G_v\neq\emptyset$ (equivalently, $m_1\neq0$). Then,
by Lemma~\ref{lem:3},
\[
\rho=1-\gamma.
\]
If $\gamma<\frac12$, then $\rho>\frac12$. Hence Theorem~2.3 of
\cite{zhang2016central} applies and yields the central limit theorem
in \eqref{clt1}.

\paragraph{Case 3: $G_v\neq\emptyset$ and $\gamma=\frac12$.}
If $\gamma=\frac12$, then
\[
\rho=\frac12.
\]
In this case, all the conditions of Theorem~2.1 of
\cite{zhang2016central} are satisfied. Therefore, the central limit
theorem stated in \eqref{clt2} follows.

\paragraph{Case 4: $G_v\neq\emptyset$ and $\gamma>\frac12$.}
Finally, suppose that $\gamma>\frac12$. Then
\[
0<\rho=1-\gamma<\frac12.
\]
By \eqref{martingalediff},
\begin{equation}
\sum_{j=1}^n
\mathbb{E}\left[
(\Delta M_j)'\Delta M_j
\mid\F{j-1}
\right]
=
O(n)
\qquad{a.s.}
\end{equation}
Moreover, \eqref{rem2} implies that
\begin{equation}
\sum_{j=1}^nR_j
=
o\big(n^{1-\rho-\delta_0}\big)
\qquad{a.s.}
\end{equation}
for every $\delta_0\in(0,1-\rho)$. Hence, all the conditions of
Theorem~2.2 of \cite{zhang2016central} are satisfied. Consequently,
there exist complex random variables $\xi_1,\ldots,\xi_s$ such that
\begin{equation}\label{as2}
\frac{n^\rho}{(\log n)^{\nu-1}}(U_n-u)
-
\sum_{l\in S_u}
e^{-i\Im(1-\lambda_l)\log n}
\xi_l\mathbf{e}_lT^{-1}
\xrightarrow{a.s.}0,
\qquad n\rightarrow\infty,
\end{equation}
where $S_u$ is the collection of indices $l\in\{1,\ldots,s\}$ such that
$1-\lambda_l$ is an eigenvalue of $D_u$ with real part $\rho$ and
algebraic multiplicity $\nu_l=\nu$. Recall that
\[
{Sp}(H)=\{\lambda_1,\ldots,\lambda_s\}.
\]

By Lemma~\ref{lem:3}, $S_u$ is also the collection of indices $l$
such that $\lambda_l$ is an eigenvalue of $H$ with algebraic
multiplicity $\nu_l=\nu$ and real part $\gamma$. For $l\in S_u$, define
\[
\mathbf{r}_l:=\mathbf{e}_lT^{-1}.
\]
Here, $\mathbf{e}_l$ denotes the $mk$-dimensional row vector whose
$(\nu_1+\cdots+\nu_l)$-th coordinate is $1$ and remaining
coordinates are zero.
Recalling the Jordan decomposition of $D_u$, this gives precisely the
almost sure convergence in \eqref{as}.

It remains to identify the random variables $\xi_l$. For this, we
follow the proof of Theorem~2.2 in \cite{zhang2016central}. In their
notation, the matrix $\mathbf{H}$ corresponds to $D_u$ here. Recall
the Jordan decomposition of $D_u$. Set
\begin{equation}
(U_n-u)T
=
y_n
=
(y_{n,1},\ldots,y_{n,s}),
\end{equation}
where $y_{n,l}$ is a row vector of length $\nu_l$, and define
\begin{equation}
\widetilde{\Pi}_j^{n,l}
:=
\prod_{c=j+1}^n
\left(I_{\nu_l}-\frac{J_l}{c}\right).
\end{equation}
For $l\in S_u$, since
$\Re(1-\lambda_l)<1$,
\[
\widetilde{\Pi}_0^{n,l}n^{J_l}
\longrightarrow A_l
\]
for some invertible matrix $A_l$. The proof of Theorem~2.2 of
\cite{zhang2016central} then gives
\begin{equation}\label{xi}
\begin{split}
y_{n,l}n^{J_l}
&\xrightarrow{a.s.}
\left[
y_{0,l}
+
\sum_{j=1}^{\infty}
\frac{\widetilde{s}_{j,l}}{j}
\frac{I_{\nu_l}-J_l}{j+1}
\big(\widetilde{\Pi}_0^{j+1,l}\big)^{-1}
\right]A_l\\
&=:\xi_l,
\end{split}
\end{equation}
where
\begin{equation}
\widetilde{s}_n
=
(\widetilde{s}_{n,1},\ldots,\widetilde{s}_{n,s})
:=
\left(\sum_{i=1}^nR_i^*\right)T
\end{equation}
and
\begin{equation}
R_{n+1}^*
:=
(n+1)(U_{n+1}-U_n)
+
(U_n-u)D_u.
\end{equation}

If $D_u$ is diagonalizable in $\mathbb{R}^{mk\times mk}$, then $T$ and
the matrices $J_l$ are all real. Hence, by \eqref{xi}, $\xi_l$ is real
for every $l\in S_u$. Since
\[
\Im(1-\lambda_l)=0,
\qquad\forall l\in S_u,
\]
it follows from \eqref{as2} that
\begin{equation}
\frac{n^\rho}{(\log n)^{\nu-1}}(U_n-u)
\xrightarrow{a.s.}
\sum_{l\in S_u}
\xi_l\mathbf{e}_lT^{-1}
=:L.
\end{equation}
Thus, $L$ is a real $mk$-dimensional random vector. This completes the
proof.\qed

\bibliographystyle{plain}
\bibliography{Ref}

\end{document}